\newtheorem{theorem}{Theorem}[section]
\newtheorem{lemma}[theorem]{Lemma}
\newtheorem{corollary}[theorem]{Corollary}
\newenvironment{frobprob}[1]
  {\innercustomthm}
  {\endinnercustomthm}
\theoremstyle{definition}
\newtheorem{definition}[theorem]{Definition}
\newtheorem{example}[theorem]{Example}
\newtheorem{remark}[theorem]{Remark}
\numberwithin{equation}{section}
\newenvironment{prooffp}{\noindent {\emph{Proof of Theorem \ref{the:frobprob}.}}}{\hfill $\Box$ } 
\newcommand{\ff}{\mathbb F}
\newcommand{\qq}{\mathbb Q}
\newcommand{\xx}{\mathbf x}
\newcommand{\vv}{\mathbf v}
\newcommand{\nn}{\mathbf n}
\newcommand{\uu}{\mathbf u}
\newcommand{\ww}{\mathbf w}
\newcommand{\zz}{\mathbb{Z}}
\newcommand{\T}{\mathcal{T}_d}
\newcommand{\V}{\mathcal{V}}
\newcommand{\A}{\mathcal{A}}
\newcommand{\fd}{\mathcal{F}_d}
\newcommand{\pd}{\mathcal{P}_d}
\newcommand{\md}{\mathcal{M}_d}
\newcommand{\rt}{\mathcal{R}_T}
\newcommand{\st}{\mathcal{S}_T}
\newcommand{\car}{\operatorname{char}}
\newcommand{\rank}{\operatorname{rank}}
\newcommand{\ind}{\operatorname{ind}}
\newcommand{\Conceicao}{Concei{\c c}\~ao}
\newcommand{\mnc}{k[t]_{\geq0}}
\newcommand{\gtil}{\tilde{g}}
\newcommand{\Atil}{\tilde{A}}
\newcommand{\FPP}{\ref{prob:frobprob}}
\begin{document}
\title{On a Frobenius problem for polynomials}
\author[R. \Conceicao]{Ricardo \Conceicao}
\address{Oxford College of Emory University. 100 Hamill st. Oxford, Ga. 30054.}
\email{rconcei@emory.edu}
\author[R. Gondim]{Rodrigo Gondim}
\address{Universidade Federal Rural de Pernambuco, Brazil.}
\email{rodrigo.gondim.neves@gmail.com}
\author[M. Rodriguez]{Miguel Rodriguez}
\address{Private sector.}
\email{rodmiga@yahoo.com}

\date{\today}
\maketitle
\begin{abstract}
We extend the famous diophantine Frobenius problem to the case of polynomials over a field $k$. Similar to the classical problem, we show that the $n=2$ case of the Frobenius problem for polynomials is easy to solve. In addition, we translate a few results from the Frobenius problem over $\zz$ to $k[t]$ and  give an algorithm to solve the Frobenius problem for polynomials over a field $k$ of sufficiently large size.
\end{abstract}

\section{Introduction}

The Frobenius problem (FP) is a problem in basic number theory related to nonnegative integer solutions $(x_1,\ldots,x_n)$ of 
$$
x_1a_1 + \cdots + x_na_n = f,
$$ where the $a_i$'s and $f$ are positive integers and $\gcd(a_1,\ldots,a_n)=1$. In particular, the Frobenius number $g=g(a_1,\ldots,a_n)$ is the largest $f$ so that this equation fails to have a solution and the Frobenius problem is to compute $g$. This classical problem has a long history and has found many applications in mathematics as seen in the book \cite{Alfonsin_book}, which contains the state of the art on FP  as well as almost 500 references on the subject and its applications.

As early as the mid-nineteenth century, mathematicians started to notice a strong relationship between the ring of integers $\zz$ and the ring of polynomials $k[t]$ over a field $k$,  specially when $k$ is  finite. The discovery of this connection has proved very fruitful to number theory and it has grown into an area of active research known as the arithmetic of function fields (see for instance \cite{Rosen_book,Thakur_book}). In the arithmetic of function fields, many of the classical results and conjectures in number theory (such as the Prime Number Theorem, Falting's Theorem, and the Riemann Hypothesis to name a few) have found an analogous statement over $k[t]$. Surprisingly, FP is one of the few classical and folkloric results in number theory for which an analogous statement over function fields cannot be found in the literature.
The main goal of this note is to propose an analogous  FP over $k[t]$. 

The first thing to notice is  that unlike the classical case where every non-zero integer is  either positive or negative,  we have many different ways of choosing the ``sign" of a polynomial since the set of units of $k[t]$ is $k^*$. Nonetheless, a polynomial that is either the zero polynomial or monic is a natural choice for the notion of  a ``non-negative'' polynomial. 
\begin{definition}
We will denote by $\mnc$ the set of all monic polynomials over a field $k$ together with the zero element.
\end{definition}
Given monic polynomials $A_1,\cdots,A_n, F$, our formulation of FP over $k[t]$ is related to solutions of 
\begin{equation}\label{eq:frob_def}
x_1A_1+\cdots +x_nA_n=F , 
\end{equation}
with $x_i\in\mnc$. It is based on the following theorem, whose prove we delay until the next section.
\begin{theorem}\label{the:frobprob}
 Let $n\geq 2$ be an integer and let $A_1,\ldots,A_n$ be coprime  monic polynomials in $k[t]$. Then there exists an integer $g=g(A_1,\ldots, A_n)$ such that for all monic polynomial $F$ with $\deg F> g$ there exists a solution to \eqref{eq:frob_def}  with $x_1,\ldots, x_n\in\mnc$.
\end{theorem}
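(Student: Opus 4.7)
The plan is to give an explicit upper bound $g(A_1,\ldots,A_n) \le d_1 + \max_{i\ge 2} d_i$, where $d_i = \deg A_i$ and $A_1$ is reordered to have minimal degree. The argument combines B\'ezout's identity in $k[t]$ with Euclidean division by $A_1$, followed by a finite correction that monicizes the remaining coefficients. Since $k[t]$ is a PID, this is essentially the function-field analogue of the standard $\zz$-proof, with the prime $a_1$ replaced by the smallest-degree polynomial $A_1$.

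First I would use coprimality to invoke B\'ezout and produce polynomials $y_1,\ldots,y_n \in k[t]$ with $\sum y_i A_i = F$ (multiply a B\'ezout representation of $1$ by $F$). Next, for each $i \ge 2$ I would perform Euclidean division $y_i = q_i A_1 + r_i$ with $r_i = 0$ or $\deg r_i < d_1$, and absorb the quotients into the first coefficient to write
\begin{equation*}
F = z_1 A_1 + \sum_{i\ge 2} r_i A_i, \qquad z_1 := y_1 + \sum_{i\ge 2} q_i A_i.
\end{equation*}
A crude degree count shows $\deg\bigl(\sum_{i\ge 2} r_i A_i\bigr) \le (d_1 - 1) + \max_{i\ge 2} d_i$, so as soon as $\deg F > d_1 + \max_{i\ge 2} d_i$ the leading term of $F$ must come from $z_1 A_1$; since $A_1$ and $F$ are monic, this forces $z_1$ to be monic of degree $\deg F - d_1$.

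The remaining step is to monicize each nonzero $r_i$. The trick is the trivial relation $A_1 A_i - A_i A_1 = 0$: for each index $i \ge 2$ with $r_i \ne 0$, replace $r_i \mapsto r_i + A_1$ and simultaneously $z_1 \mapsto z_1 - A_i$. This leaves the equation $F = \sum x_i A_i$ invariant, and since $\deg r_i < d_1 = \deg A_1$ and $A_1$ is monic, the new $r_i + A_1$ is monic of degree $d_1$. The main technical point — and the only real thing to check — is that these successive corrections do not destroy the monicity of $z_1$: the final first coefficient is $z_1 - \sum_{i \in S} A_i$, where $S \subseteq \{2,\ldots,n\}$ indexes the indices we corrected, and this stays monic provided $\deg z_1 > \max_{i\ge 2} d_i$, i.e.\ $\deg F > d_1 + \max_{i\ge 2} d_i$. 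Taking $g := d_1 + \max_{i\ge 2} d_i$ therefore works, proving the existence claim.
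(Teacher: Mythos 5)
Your proof is correct, and it takes a genuinely different route from the paper's. The paper argues by induction on $n$: the base case $n=2$ is Lemma~\ref{lem:frob_dim2} (reduce $x_0$ mod $B$, observe $y_0$ is then forced to be monic of large degree, and shift $(x_0,y_0)\mapsto(x_0+B,y_0-A)$), and the inductive step sets $D=\gcd(A_1,\ldots,A_{n-1})$, solves the two-variable problem $x_nA_n+zD=F$ to get a monic $z$ of large enough degree, and then applies the induction hypothesis to $z/D$. This inductive structure is what produces the gcd-sensitive bounds of Remark~\ref{rmk:upper_bound} and Remark~\ref{rmk:min_up_bnd}, which the paper later uses as the starting point for its algorithm.

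Your argument instead generalizes the $n=2$ mechanism directly to arbitrary $n$ in one pass, with no induction: after B\'ezout, reduce every $y_i$ with $i\ge 2$ modulo the minimal-degree $A_1$, absorb the quotients into $x_1$, read off that $x_1$ is monic of degree $\deg F-d_1>\max_{i\ge2}d_i$ from a degree count, and then repair each non-monic remainder by the exchange $r_i\mapsto r_i+A_1$, $x_1\mapsto x_1-A_i$. The key monicity check for $x_1-\sum_{i\in S}A_i$ goes through because $\deg x_1>\max_{i\ge 2}d_i\ge\deg\bigl(\sum_{i\in S}A_i\bigr)$. This is shorter, self-contained, and yields a single closed-form bound $g\le d_1+\max_{i\ge 2}d_i=\min_i d_i+\max_i d_i$. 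The trade-off is that your bound ignores gcd structure entirely: when, say, $\gcd(A_1,A_2)$ has positive degree, the paper's recursive bound can be strictly smaller. So your proof is a cleaner way to establish Theorem~\ref{the:frobprob} itself, while the paper's induction is doing extra work to set up the sharper bounds it needs later.
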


Based on this result, below we give a statement over $k[t]$ that is analogous to the classical Frobenius problem.
\begin{definition}
If \eqref{eq:frob_def} has a solution in $\mnc$ for all monic polynomial $F$ then we define $g(A_1,A_2,\ldots,A_n)=-\infty$. Otherwise, we define $g(A_1,\ldots,A_n)$ as the largest degree of a monic polynomial $F$ for which equation \eqref{eq:frob_def} has no solutions in $\mnc$. We call $g(A_1,\ldots,A_n)$  the \emph{frobenius degree of $A_1,\ldots,A_n$}. 

\end{definition}
\begin{frobprob}{FPP}\label{prob:frobprob}
Given coprime monic polynomials $A_1,\ldots,A_n$, compute $g(A_1,\ldots,A_n)$.
\end{frobprob}

\begin{remark}
It is worth noting that, technically, $g=g(A_1,\ldots,A_n)$ also depends on the field $k$ over which the $A_i$'s are defined. There are two reasons why  we have dropped  from the notation of $g$ the dependence on the base field. First, we will be mostly  concerned on computing $g$ over a fixed field $k$. Second, although  there are instances where $g$ changes if we replace $k$ by one of its extension field $K$, it turns out that $g$ is not affected by field extensions as long as $|k|$ is sufficiently large. See Section \ref{sec:frob_field_ext} and Corollary \ref{cor:frob_ext_field} for a proof of a more precise version of this statement.
\end{remark}

Generally, when comparing $\zz$ and $k[t]$, the role  of the absolute value function in $k[t]$ is played by  the degree of a polynomial. But unlike $\zz$, $k[t]$ does not satisfy the well-ordering principle, and a set of polynomials of bounded ``size'' does not contain a ``largest'' polynomial. Therefore, there is not a unique polynomial $F$ with $\deg F=g(A_1,\ldots,A_n)$ for which \eqref{eq:frob_def} has no solution in $\mnc$. This observation inspires the following definition.
\begin{definition}
If $g=g(A_1,\ldots,A_n)>-\infty$, then a monic polynomial $F$ with $\deg F=g$  for which equation \eqref{eq:frob_def} has no solutions in $\mnc$ is said to be a \emph{counter-example to FPP} for $A_1,\ldots,A_n$.
\end{definition}

In light of this definition, an alternative version of \FPP\ could deal with not only the computation of  $g(A_1,\ldots,A_n)$, but also with the  construction of a counter-example to \FPP. In Section \ref{sec:algorithm}, we provide an algorithm that in most cases solve both versions of \FPP. It is worth pointing out that constructing a counter-example to \FPP\ seems to be more computationally challenging than simply finding  $g(A_1,\ldots,A_n)$.

The rest of this article is dedicated to further comparison between  the classical FP and \FPP, and it is organized as follows. In the next section, we give  a proof of Theorem \ref{the:frobprob}. In Section \ref{sec:remarks} we make some remarks on \FPP\ and how it differs from the classical problem. Section \ref{sec:examples} is devoted to presenting two examples for which the Frobenius degree can be computed explicitly. These examples are used to solve \FPP\ for dimension 2 and to prove that the upper and lower bounds given in the text for $g(A_1,\ldots,A_n)$ are sharp. Section \ref{sec:denumerant} gives a version for polynomials of the classical denumerant function and compute an asymptotic formula in dimension 2 that resembles Schur's classical asymptotic formula for FP. In the last section, we give an algorithm to solve \FPP\ for $n\geq 3$, and prove that $g(A_1,\ldots,A_n)$ is not affected by base field extensions $K/k$, if $|k|$ is sufficiently large.

\section{Proof of Theorem \ref{the:frobprob}} 

The proof of Theorem \ref{the:frobprob} is by induction on $n$. In the following lemma, we prove the base case for induction.

\begin{lemma}\label{lem:frob_dim2}
Let $A$ and $B$ be coprime monic polynomials in $k[x]$. If $F\in\mnc$ and $\deg F>\deg A+\deg B$, then there exist $x,y\in\mnc$ such that $F=xA+yB$. 
\end{lemma}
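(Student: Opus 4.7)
The plan is to start from the B\'ezout relation afforded by $\gcd(A,B) = 1$ and then exploit the one-parameter freedom $(u,v) \mapsto (u + hB, v - hA)$ in representations $F = uA + vB$ to push both coefficients into $\mnc$. First I would produce any $k[t]$-representation $F = uA + vB$ from B\'ezout, then run a single Euclidean reduction: replace $v$ by its remainder modulo $A$ (absorbing the quotient into $u$), so that the new representation satisfies $\deg v < \deg A$.

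With that normalization in place, the degree count is forced. Since $\deg v < \deg A$ gives $\deg(vB) < \deg A + \deg B < \deg F$, the term $uA$ must have degree exactly $\deg F$. Because $A$ and $F$ are monic, comparing leading coefficients shows that $u$ is automatically monic of degree $\deg F - \deg A$. At this point, if $v$ is either zero or already monic, the pair $(x,y) = (u,v)$ lies in $\mnc \times \mnc$ and the proof is complete.

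Otherwise $v$ is a nonzero, non-monic polynomial of degree strictly less than $\deg A$, and I would replace the current pair by $(u - B,\; v + A)$, which still satisfies the B\'ezout-type relation. Since $\deg A > \deg v$ and $A$ is monic, $v + A$ is automatically monic. For $u - B$ I need $\deg u > \deg B$, which is exactly where the hypothesis $\deg F > \deg A + \deg B$ is used: it yields $\deg u = \deg F - \deg A > \deg B$, so the leading coefficient of $u - B$ coincides with that of $u$, namely $1$, and $u - B$ is monic.

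The only place any non-trivial hypothesis enters is the strict inequality $\deg F > \deg A + \deg B$, used in the adjustment step to keep the leading term of $u - B$ alive; without strictness, subtracting $B$ from $u$ could cancel the leading term and destroy monicity. Beyond this single degree check, the argument is just bookkeeping on top of the extended Euclidean algorithm, consistent with the dimension-$2$ classical Frobenius problem also admitting a direct closed-form solution.
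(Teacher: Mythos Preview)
Your proof is correct and follows essentially the same strategy as the paper's: obtain any representation, reduce one coefficient via Euclidean division so that it has small degree, use the hypothesis $\deg F > \deg A + \deg B$ to force the other coefficient to be monic of large degree, and then shift by $\pm(B,-A)$ to make both lie in $\mnc$. The only cosmetic differences are that the paper reduces $x_0$ modulo $B$ (rather than your $v$ modulo $A$) and always performs the final adjustment $(x_0+B,\,y_0-A)$, thereby avoiding your case split on whether $v$ is already monic.
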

\begin{proof}
If  $(x_0,y_0)$ is a particular solution of the linear equation $F=Ax+By$, then its general solution is given by $x=x_0+uB$ and $y=y_0-uA$,  where $u$ is an arbitrary polynomial. This implies that we can write $F=x_0A+y_0B$ with  $\deg x_0<\deg B$. Since $\deg F>\deg A+\deg B$, we conclude  that $y_0B=F-x_0A$ is a monic polynomial of degree $>\deg A+\deg B$. As a consequence, we have that $y_0$ is monic and $\deg y_0>\deg A$. Let  $x=x_0+B$ and $y=y_0-A$. Then $x$ and $y$ lie on $\mnc$ and $F=xA+yB$.
%
\end{proof}
\begin{remark}\label{rmk:deg_sol}
\begin{enumerate}
\item If one follows the proof of Lemma \ref{lem:frob_dim2}, one can actually show that if $\deg F>\deg A +\deg B$, then there exists a solution in $\mnc$ to $Ax+By=F$ that satisfies $\deg x= B$ and $\deg y=\deg F-\deg B$. The argument is  symmetric in $A$ and $B$, and  guarantees the existence of solutions satisfying $\deg x=\deg F-\deg A$ and $\deg y=\deg A$.
\item Notice that this proves that $g(A,B)\leq \deg A+\deg B$.
\end{enumerate}
\end{remark}


\begin{prooffp}
We remind the reader that the proof is by induction on $n$. If $\gcd(A_1,\ldots,A_{n-1})=1$, then the result follows by induction. Thus we assume that $\gcd(A_1,\ldots,A_{n-1})=D$ with $D$ a monic polynomial of positive degree. Write $\Atil_i=A_i/D$. Notice that $\gcd(\Atil_1,\ldots,\Atil_{n-1})=1$ and $\gcd(A_n,D)=1$. By the induction hypothesis, there exists an integer $\gtil=g(\Atil_1,\ldots,\Atil_{n-1})$ such that the equation $x_1\tilde{A}_1+\ldots +x_{n-1}\Atil_{n-1}=z$ has a solution satisfying $x_1,\ldots,x_{n-1}\in\mnc$ whenever $\deg z>\gtil$. We will prove that \eqref{eq:frob_def} has a solution with $x_1,\ldots,x_{n}\in\mnc$   whenever 
\begin{equation}\label{eq:frob_degree}
 \deg F>\max\{\deg A_n,\gtil\}+\deg D.
\end{equation}

First notice that  \eqref{eq:frob_degree}, Lemma \ref{lem:frob_dim2} and Remark \ref{rmk:deg_sol} imply that  the equation
$$
x_n A_n+zD=F
 $$
has a solution with $x_n,z\in\mnc$ and $\deg z=\deg F-\deg D$. This together with \eqref{eq:frob_degree} imply $\deg z>\gtil$. Therefore, by the induction hypothesis, the equation
$$
x_1\tilde{A}_1+\ldots +x_n\Atil_{n-1}=z=\dfrac{F-x_nA_n}{D},
$$
has a solution with $x_1,\ldots,x_{n-1}\in\mnc$ and the result follows after multiplying the last equation by $D$.
\end{prooffp}
\begin{remark}\label{rmk:upper_bound}
Notice that implicit in the proof of  Theorem \ref{the:frobprob} are the following upper bounds for the Frobenius degree of coprime monic polynomials $A_1,\ldots,A_n$ with $n>2$. If  $\gcd(A_1,\ldots,A_{n-1})=1$ then 
$$
g(A_1,\ldots,A_n)\leq g(A_1,\ldots,A_{n-1}).
$$
If $D=\gcd(A_1,\ldots,A_{n-1})$ has positive degree then
$$
g(A_1,\ldots,A_n)\leq \max\left\{\deg A_n,g\left({A_1}/{D},\ldots,{A_{n-1}}/{D}\right)\right\}+\deg D.
$$
\end{remark}

\begin{remark}\label{rmk:min_up_bnd}
Clearly, the upper bound given in the previous remark depends on the ordering of the $A_i$'s and the computation of the Frobenius degre of $n-1$ coprime polynomials. To avoid such dependence, we  consider $S=\{B_1,\ldots,B_m\}$ to be a subset of $\{A_1,\ldots,A_n\}$, and define inductively  the following function $U(S)$. We let $U(S)=\deg B_1+\deg B_2$, if $m=2$. Otherwise, $U(S)=U(B_1,\ldots, B_{m-1})$, if $\gcd(B_1,\ldots,B_{m-1})=1$; or $D_S=\gcd(B_1,\ldots,B_{m-1})$ has positive degree and
$$
U(S)=\max\{\deg B_m,U(B_1/D_S,\dots,B_{m-1}/D_S)\}+\deg D_S.
$$

Thus Remark \ref{rmk:upper_bound} and Lemma \ref{lem:frob_dim2} implies that for $n>2$
$$
g(A_1,\ldots,A_n)\leq \min \{U(S): S\subset \{1,\ldots,n\} , |S|=n-1\}.
$$
\end{remark}

\section{Remarks on \FPP}\label{sec:remarks}
As we noted in the introduction, unlike $\zz$, the units in the ring of polynomials $k[t]$ can be quite large. Although this difference allows one to be flexible when choosing the ``sign" of a polynomial, it does not  prevent \FPP\ to be a well posed problem in the arithmetic of function fields. We do have a few others significant difference between $\zz$ and $k[t]$ which creates some striking differences between the classical FP and \FPP. In this section we present two results that stem from these differences and which are intrinsic to the function field setting. 

The first notable difference is given by the existence of base fields of positive characteristic $p$. As we show in Theorem \ref{the:pos_char} below,   \FPP\ in dimension $n$ is easy to solve if $n\geq p$. 

As noted in the introduction, another striking difference between the classical and the polynomial Frobenius problem is the existence of base field extensions of the ring $k[t]$. We show in this section that if we fix coprime monic polynomials $A_1,\ldots,A_n$ over $k[t]$, then for some field extension $K/k$, $g(A_1,\ldots,A_n)$ may increase  if we consider solutions of \eqref{eq:frob_def} over $K[t]_{\geq 0}$ instead.


\subsection{Issues in positive characteristic}
When comparing the arithmetic of $\zz$ and $k[t]$, it is  often the case that the analogy is tighter if we take $k$ to be a finite field.  This is also the case for \FPP, since for $k$ finite with characteristic $p<n$, we are able to show that \eqref{eq:frob_def}  has a finite number of solutions in $\mnc$, in perfect analogy with FP over $\zz$. 

Let $k$ be a field of characteristic $p\geq 0$. 
If $n<p$ or $p=0$ then 
$$
\deg F=\max\{\deg A_i+\deg x_i: 1\leq i \leq n\}.
$$
This implies that for all $1\leq i\leq n$,  
\begin{equation}\label{eq:upboundxi}
\displaystyle \deg x_i\leq \deg F-\min_{1\leq i \leq n}\{\deg A_i\}. 
\end{equation}
In particular,  whenever $p=0$ or $n<p$, the polynomials $x_i$'s in a solution of \eqref{eq:frob_def} have bounded degree.  As we show below, the condition $n<p$ or $p=0$ is not only sufficient but also necessary in order for the monic solutions to the equation \eqref{eq:frob_def} to have bounded degree. Since the set of polynomials of bounded degree is finite when $k$ is finite, this result allows us to give a criteria for when \eqref{eq:frob_def} has a finite number of solutions in $\mnc$.

\begin{theorem}\label{the:pos_char}
Let $A_1,A_2,\ldots,A_n$ be coprime monic polynomials in $k[t]$, with $k$ a field of  characteristic $p>0$. For any monic polynomial $F$, the equation \eqref{eq:frob_def}
has solutions $x_i\in\mnc$ with arbitrarily large degree if and only if $n\geq p$. 
\end{theorem}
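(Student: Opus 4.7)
The plan is to prove the two directions separately, using the leading-coefficient observation from the paragraphs preceding the theorem for one direction, and an explicit monic syzygy for the other.

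For the ``only if'' direction, namely that $n<p$ forces bounded degrees, I would make rigorous the observation already sketched in the text. Suppose $(x_1,\ldots,x_n)\in\mnc^n$ solves $\sum x_iA_i=F$, let $d=\max\{\deg(x_iA_i):x_i\neq 0\}$, and let $I=\{i:x_iA_i\neq 0,\ \deg(x_iA_i)=d\}$. Because each $x_iA_i$ with $i\in I$ is monic of degree $d$, the coefficient of $t^d$ in $\sum x_iA_i$ equals $|I|\bmod p$. Since $1\leq |I|\leq n<p$, this coefficient is nonzero, so $d=\deg F$ and hence $\deg x_i\leq \deg F-\deg A_i\leq \deg F-\min_j\deg A_j$ for every $i$, exactly \eqref{eq:upboundxi}. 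Thus the degrees of the $x_i$ cannot be arbitrarily large.

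For the ``if'' direction, the key idea is to construct a nontrivial \emph{monic syzygy} among the $A_i$'s and use it to inflate any existing solution. Assume $n\geq p$; after relabeling, I would focus on $A_1,\ldots,A_p$ and set
\[
y_i=\prod_{\substack{1\leq j\leq p\\ j\neq i}}A_j\quad(1\leq i\leq p),\qquad y_i=0\quad(p<i\leq n).
\]
Each $y_i$ lies in $\mnc$ (product of monics, or zero), and
\[
\sum_{i=1}^{n}y_iA_i=\sum_{i=1}^{p}\prod_{j=1}^{p}A_j=p\prod_{j=1}^{p}A_j=0
\]
in characteristic $p$. Now fix any monic $F$ admitting at least one solution $(x_1,\ldots,x_n)\in\mnc^n$ (which exists once $\deg F$ is large enough, by Theorem \ref{the:frobprob}), and given any target bound $M$, pick a monic polynomial $Q$ with $\deg Q>M+\max_i\deg x_i$. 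Defining $x_i'=x_i+Qy_i$, the new tuple still satisfies $\sum x_i'A_i=F+Q\cdot 0=F$, and for $1\leq i\leq p$ the summand $Qy_i$ is monic of degree strictly greater than $\deg x_i$, so no cancellation can occur at the top and $x_i'$ is monic of degree $\deg Q+\deg y_i>M$. For $i>p$ we simply have $x_i'=x_i\in\mnc$. Hence solutions of arbitrarily large degree exist.

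The one subtle point---and the only real obstacle---is ensuring that the perturbed coordinates $x_i+Qy_i$ remain in $\mnc$. This reduces to the elementary fact that two polynomials of different degrees cannot cancel, which is why choosing $\deg Q$ strictly larger than all $\deg x_i$ suffices. Everything else is a straightforward combination of the leading-coefficient argument and the explicit construction of the syzygy $(y_1,\ldots,y_p,0,\ldots,0)$.
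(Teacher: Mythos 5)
Your ``only if'' direction is correct and is essentially the same leading-coefficient argument the paper sketches before the theorem: when $n<p$, the set of indices attaining $\max_i\deg(x_iA_i)$ has cardinality strictly less than $p$, so the top coefficient cannot cancel and $\deg F=\max_i\deg(x_iA_i)$, which bounds every $\deg x_i$ by \eqref{eq:upboundxi}.

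The ``if'' direction has a genuine gap. The theorem asserts that when $n\geq p$, \emph{every} monic $F$ admits solutions in $\mnc$ of arbitrarily large degree --- in particular it asserts existence of at least one such solution for \emph{every} $F$, not merely for $F$ of large degree. You start from a solution $(x_1,\ldots,x_n)\in\mnc^n$ and justify its existence by Theorem \ref{the:frobprob}, which only applies when $\deg F$ exceeds the Frobenius degree; for $F$ of small degree you have produced nothing. The paper avoids this by starting instead from an \emph{arbitrary} polynomial representation $F=\sum G_iA_i$ (which exists for every $F$ since $\gcd(A_1,\ldots,A_n)=1$, with no monicity requirement on the $G_i$) and then correcting \emph{all $n$} coordinates to be monic simultaneously. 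Because the $G_i$ for $i>p$ may fail to lie in $\mnc$, your single syzygy $(y_1,\ldots,y_p,0,\ldots,0)$, which only perturbs the first $p$ coordinates, cannot repair them; and you cannot in general push the weight onto the first $p$ slots by setting $G_{p+1}=\cdots=G_n=0$, because $\gcd(A_1,\ldots,A_p)$ need not be $1$ (e.g.\ $p=2$, $A_1=A_2=t$, $A_3=t+1$). This is precisely why the paper writes $n=ap+b$ and uses \emph{two} monic syzygies supported on index sets $R=\{1,\ldots,ap\}$ and $S=\{n-p+1,\ldots,n\}$, each of size divisible by $p$ and with $R\cup S=\{1,\ldots,n\}$, scaling them by $t^l$ and $t^m$ with $l\gg m\gg\max_i\deg G_i$: every index is hit by at least one syzygy term that dominates $G_i$, so all $n$ perturbed coordinates become monic of large degree while the sum is unchanged. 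Your inflation step (choosing $\deg Q$ large so that $x_i+Qy_i$ stays monic) is the right mechanism, and combined with the paper's two-syzygy cover and an arbitrary starting representation $\sum G_iA_i=F$ it would close the gap.
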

\begin{proof}
As discussed above, if $n<p$ then the degrees of the solutions $x_i\in\mnc$ of \eqref{eq:frob_def} are bounded above by \eqref{eq:upboundxi}. Thus we are left to show that if $n\geq p$ then \eqref{eq:frob_def} has solutions $x_i\in\mnc$ of unbounded degree. Write $n=ap+b$ with $a> 0$ and $0\leq b<p$. We first consider the case where $b\neq 0$.

Let $R=\{1,2,\ldots,pa\}$ and $S=\{n-p+1,n-p+2,\ldots,n\}$. Notice that $R\cup S=\{1,2,\ldots,n\}$, $|S|$ and $|R|$ are divisible by $p$ and that $|R\cap S|=p-b+1$. For $s\in S$ and $r\in R$, the monic polynomials  $y_s=(\prod_{ l \in S}A_l)/A_s$ and $z_r=(\prod_{l \in R} A_l)/A_r$ satisfy
\begin{equation}\label{eq:sum_zero}
\sum_{s\in S} y_{s}A_s=\sum_{r\in R} z_{r}A_r=0.
\end{equation}
Since $\gcd(A_1,\ldots,A_n)=1$, we can find polynomials $G_1,\ldots,G_n$ such that $F=A_1G_1+\cdots+A_nG_n$. Let $l$ and $m$ be positive integers satisfying 
$$
l>m+\max\{\deg z_{r}:r\in R\}>\max\{\deg G_i:1\leq i\leq n\}.
$$
Thus the polynomials
$$
x_i=\begin{cases}
 t^{l}y_i+G_i,\text{if } i \in R\backslash R\cap S\\
 t^ly_i+t^mz_i+G_i,\text{if }i\in R\cap S\\
t^mz_i+G_i,\text{if }i\in S\backslash R\cap S
\end{cases}
$$
are monic and have unbounded degree. The result follows from \eqref{eq:sum_zero} and the following computation
\begin{eqnarray*}
\sum_{i=1}^n x_iA_i&=&\sum_{i \in R\backslash R\cap S}( t^{l}y_i+G_i)A_i+\sum_{i\in R\cap S}( t^ly_i+t^mz_i+G_i)A_i+\sum_{i\in S\backslash R\cap S}(t^mz_i+G_i)A_i\\
&=&t^l\sum_{i\in R}y_iA_i+t^m\sum_{i\in S}z_iA_i+\sum_{i=1}^nG_iA_i=F
\end{eqnarray*}
After some minor adjustments, the above proof works for $b=0$  if we regard $S=\emptyset$.
\end{proof}

\begin{remark}
The previous result also shows that over a field of positive characteristic $p$, $g(A_1,\ldots,A_n)>-\infty$  if and only if $n<p$ or $1\notin\{A_1,\ldots,A_n\}$. While in the classical case we have $g(A_1,\ldots,A_n)>-\infty$ if and only if $1\notin\{A_1,\ldots,A_n\}$.
\end{remark}

\subsection{\FPP\ over  extensions of the base field}\label{sec:frob_field_ext}
Another critical difference between the arithmetic of function fields and that of $\qq$ is the existence of constant field extensions. Concerning \FPP, we first observe that, for a fixed set of coprime monic polynomials $A_1,\ldots,A_n$ over  $k$, our definition of the Frobenius degree is, a priori, dependent on the base field $k$. In order to study such dependence  on the base field, given a field extension $K/k$, we write $g_K=g_K(A_1,\ldots,A_n)$ for the largest degree of a monic polynomial $F$ over $K$ for which \eqref{eq:frob_def} has no solutions in $K[t]_{\geq 0}$. Clearly, $g_k\leq g_K$ for any field extension $K/k$. 
As we show below, there are examples of field extensions $K/k$  where $g_k<g_K$.

\begin{example}
Let $A_{i}=t+i$. Remark \ref{rmk:min_up_bnd} implies that $g(A_{1},A_2,A_3)\leq 2$. To find all monic polynomials $F$ of degree 2 for which \eqref{eq:frob_def} has a solution in $\mnc$, we only need to compute all possible linear combinations
\begin{equation}\label{eq:lin_comb}
x (t+1)+y(t+2)+z(t+3),
\end{equation}
with $(x,y,z)\in(\mnc)^3$ and $\deg x=1$ and $\deg y,\deg z<1$; or $\deg y=1$ and $\deg x,\deg z<1$; or $\deg z=1$ and $\deg x,\deg y<1$.

If we take  $k=\ff_5$, then a computer search shows that all degree  2 monic polynomials appear as the linear combination described in \eqref{eq:lin_comb}. This shows that $g_k(A_1,A_2,A_3)<2$. On the other hand, the same computation with $K=\ff_{5^2}$ shows that not all degree 2 polynomials appear as a linear combination in \eqref{eq:lin_comb}; hence   $g_K(A_1,A_2,A_3)=2$.
\end{example}

In Section \ref{sec:algorithm}, we extend the basic idea described in the previous example of looking at all possible monic linear combinations of $A_1,\ldots,A_n$. We use it to prove that $g_k=g_K$, if $|k|$ is ``sufficiently large''. In particular, $g_K(A_1,\ldots,A_n)$ is independent of the field extension $K/k$, whenever $k$ is infinite. The proof is given in Corollary \ref{cor:frob_ext_field}, where we also describe how large $|k|$ needs to be in order to ensure that $g_K=g_k$.

\section{Two interesting examples}\label{sec:examples}

In this section we give two examples of families of coprime monic polynomials $A_1,\ldots, A_n$ for which we can compute $g(A_1,\ldots,A_n)$ explicitly. Later, such examples will be used to prove that the upper and lower bounds given by Remark \ref{rmk:upper_bound} and Corollary \ref{cor:low_bound}, respectively, are sharp. Additionally, we use the result below to settle the two dimensional case of \FPP.

\begin{lemma}\label{lem:part_lowerbnd}
 Let $A_1,\ldots ,A_n$ be  pairwise coprime monic polynomials over  a field $k$. Suppose $\car( k)=0$ or $n<\car(k)$. Define $P=\prod_{i=1}^n A_i$,  $\Atil_i=P/A_i$ and $F=P-\sum_{i=1}^n \Atil_i$.
 
 Then the equation
 $$
x_1\Atil_1+\ldots +x_n\Atil_n=F , 
 $$
 has no solution with $x_i\in\mnc$. Moreover,
 $$
 g(\Atil_1,\ldots,\Atil_n)= \deg F=\deg A_1+\ldots+\deg A_n.
 $$
\end{lemma}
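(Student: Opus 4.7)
The plan is to prove the equality $g(\Atil_1,\ldots,\Atil_n)=\deg F$ by establishing both bounds. First I would record the easy setup: assuming $\deg A_j\geq 1$ for all $j$ (otherwise the problem is degenerate), each $\Atil_i$ has degree $\deg P-\deg A_i<\deg P$, so $F=P-\sum\Atil_i$ is monic of degree $\deg P=\sum\deg A_j$; moreover, the $\Atil_i$ are coprime, since any common prime divisor would have to divide a unique $A_j$ (by pairwise coprimality) and then could not divide $\Atil_j$. Thus $g(\Atil_1,\ldots,\Atil_n)$ makes sense, and the equation under study is well-posed.

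For the lower bound $g\geq \deg F$, I would argue by contradiction: suppose $\sum_i x_i\Atil_i=F$ with $x_i\in\mnc$. Using $F=P-\sum_i\Atil_i$, this rewrites as $\sum_i(x_i+1)\Atil_i=P$. Reducing modulo $A_j$ and using $\Atil_i\equiv 0\pmod{A_j}$ for $i\neq j$ together with $\gcd(\Atil_j,A_j)=1$ forces $A_j\mid(x_j+1)$. Writing $x_j+1=A_jq_j$ and substituting into $\sum_i(x_i+1)\Atil_i=P$ yields $P\sum_jq_j=P$, i.e.
\[\sum_{j=1}^n q_j=1.\]
A short case check (using $\deg A_j\geq 1$, and $\car(k)\neq 2$ which follows from $n\geq 2$ and $n<\car(k)$) rules out $x_j=0$ and $x_j=1$, so each $x_j$ is monic of degree $\geq 1$ and hence each $q_j$ is monic. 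Let $e=\max_j\deg q_j$. If $e\geq 1$, the coefficient of $t^e$ on the left is the cardinality of $\{j:\deg q_j=e\}$, an integer in $\{1,\ldots,n\}$, and by the characteristic hypothesis this is nonzero in $k$, contradicting the right side. If $e=0$, every $q_j=1$ and the sum is $n\neq 1$ in $k$, again a contradiction.

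For the upper bound $g\leq \deg F$, I would take an arbitrary monic $F'$ with $\deg F'>\deg P$ and construct a solution. By the Chinese Remainder Theorem applied to the pairwise coprime moduli $A_i$, pick $s_i\in k[t]$ with $\deg s_i<\deg A_i$ and $s_i\Atil_i\equiv F'\pmod{A_i}$. Then $F'-\sum_i s_i\Atil_i$ is divisible by every $A_j$ and hence by $P$; write $F'-\sum_i s_i\Atil_i=PM$. Since $\deg(\sum_i s_i\Atil_i)<\deg P$ while $F'$ is monic of larger degree, $M$ is monic of degree $\deg F'-\deg P\geq 1$. Now decompose $M=w_1+w_2+\cdots+w_n$ by setting $w_2=\cdots=w_n=1$ and $w_1=M-(n-1)$, still monic because $\deg M\geq 1$. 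Setting $x_i=s_i+A_iw_i$ yields monic $x_i$ (since $\deg(A_iw_i)>\deg s_i$) with $\sum_i x_i\Atil_i=\sum_i s_i\Atil_i+P\sum_i w_i=\sum_i s_i\Atil_i+PM=F'$, completing the upper bound.

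I expect the main obstacle to be the bookkeeping in the lower bound argument, specifically ensuring each $q_j$ is monic and then exploiting $\sum q_j=1$ via a cardinality-in-$k$ contradiction at the top degree; this is the only place where the characteristic hypothesis is genuinely used. The upper bound is essentially a CRT computation once one realizes that the freedom in the quotient $M$ can be distributed so that every $x_i$ ends up monic.
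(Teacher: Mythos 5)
Your proof is correct. The lower bound argument is essentially the paper's: rewrite the hypothetical solution as $\sum_i(x_i+1)\Atil_i=P$, reduce mod each $A_j$ to get $A_j\mid x_j+1$, substitute to obtain $\sum_j q_j=1$, and contradict using the characteristic hypothesis. The only cosmetic difference is that you rule out $q_j=0$ upfront (by eliminating $x_j\in\{0,1\}$) and then get the contradiction from comparing leading coefficients, whereas the paper's version allows $B_i\in\mnc$ to include $0$, shows $\deg B_i\leq 0$ hence $B_i\in\{0,1\}$, and derives the contradiction from $x_i=-1$ for the indices with $B_i=0$; the two endgames are interchangeable.

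Where you genuinely diverge is the upper bound $g(\Atil_1,\ldots,\Atil_n)\le\deg P$. The paper proves this by induction on $n$, appealing to the general upper bound from Remark~\ref{rmk:upper_bound} together with the observation that $A_n=\gcd(\Atil_1,\ldots,\Atil_{n-1})$ and $\Atil_i/A_n=\Atil_{i,n-1}$, which reduces to the case $n-1$; the base case is Lemma~\ref{lem:frob_dim2}. You instead give a direct CRT construction: for any monic $F'$ with $\deg F'>\deg P$, solve $s_i\Atil_i\equiv F'\pmod{A_i}$ with $\deg s_i<\deg A_i$, observe $F'-\sum s_i\Atil_i=PM$ with $M$ monic of degree $\ge 1$, split $M=w_1+\cdots+w_n$ into monic pieces, and set $x_i=s_i+A_iw_i$. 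Both are correct; your route is more self-contained and explicitly constructive, producing a monic solution by hand, while the paper's route reuses the inductive machinery already built for Theorem~\ref{the:frobprob} and is thus shorter given that context. Your CRT approach also makes it transparent exactly where the coprimality and the degree freedom in $M$ are being spent, which is a nice side benefit.
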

\begin{proof}
 Suppose, for the sake of contradiction, that we can find $x_i\in\mnc$ satisfying  
$x_1\Atil_1+\ldots +x_n\Atil_n=P-\sum_{i=1}^n \Atil_i$.
This implies that
\begin{equation}\label{eq:lowerbnd}
  (x_1+1)\Atil_1+\ldots +(x_n+1)\Atil_n=\prod_{i=1}^nA_i,
\end{equation}
and that $A_i\mid \Atil_i(x_i+1)$. Since by hypothesis $\gcd (A_i,\Atil_i)=1$, we have that 
\begin{equation}\label{eq:lowerbnd_cont}
  x_i+1=A_iB_i
\end{equation}
 for some  $B_i\in\mnc$. From \eqref{eq:lowerbnd_cont} and \eqref{eq:lowerbnd}, we arrive at
$$
B_1+\ldots+B_n =1.
$$

The hypothesis on $\car (k)$ and the fact that $B_1,\ldots, B_n$ are monic imply that $0=\deg 1=\max\{\deg B_1, \ldots,\deg B_n\}\geq \deg B_i$ and, consequently,   $B_i=0$ for all but one $i\in\{1,\ldots,n\}$; say $i=1$. Therefore \eqref{eq:lowerbnd_cont} implies that $x_i=-1$, for $i\neq 1$. This contradicts the fact that $x_i\in\mnc$ and the result follows.

To prove the ``moreover'' part we first note that the argument above proves that $ g(\Atil_1,\ldots,\Atil_n)\geq \deg F$. To finish the proof we show by induction on $n$ that if $A_1,\ldots,A_n$ are pairwise coprime monic polynomials then $g(\Atil_1,\ldots,\Atil_n)\leq \deg A_1+\cdots+\deg A_n$.

The base case $n=2$ was proved in Lemma \ref{lem:frob_dim2}. Let  $P_{n-1}=\prod_{i=1}^{n-1} A_i$ and $\Atil_{i,n-1}=P_{n-1}/A_i$, for $1\leq i\leq n-1$. By the induction hypothesis 
$$
g\left(\Atil_{1,n-1},\ldots,\Atil_{n-1,n-1}\right)\leq \deg A_1+\cdots +\deg A_{n-1}.
$$
Notice that $\Atil_{i,n-1}=\Atil_i/A_n$ and that $A_n=\gcd(\Atil_1,\ldots,\Atil_{n-1})$. 
This fact and the upper bound in Remark \ref{rmk:upper_bound} imply that
\begin{eqnarray*}
g(\Atil_1,\ldots,\Atil_n) &\leq & \max\left\{\deg \Atil_n,g\left({\Atil_1}/{A_n},\ldots,{\Atil_{n-1}}/{A_n}\right)\right\}+\deg A_n\\
&\leq & \max\left\{\sum_{i=1}^{n-1}\deg A_i,g\left(\Atil_{1,n-1},\ldots,\Atil_{n-1,n-1}\right)\right\}+\deg A_n\\
&\leq & \deg A_1+\cdots+\deg A_n,
\end{eqnarray*}
as desired.
\end{proof}

Clearly, the two-dimensional case of \FPP\ is the case $n=2$ of the previous result. Still, we restate it below for future reference.
\begin{corollary}
Let $A$ and $B$ be  coprime monic polynomials over  a field $k$. Suppose $\car( k)=0$ or $\car(k)$ is odd. Then
$$
g(A,B)=\deg A+\deg B,
$$
and $G=AB-A-B$ is a couter-example to \FPP\ for $A,B$.
\end{corollary}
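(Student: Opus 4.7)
The plan is to obtain the corollary as a direct specialization of Lemma~\ref{lem:part_lowerbnd} to the case $n=2$, so the only real work is checking that hypotheses and notation line up correctly.

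First I would verify that the characteristic hypothesis matches. The condition in Lemma~\ref{lem:part_lowerbnd} is $\car(k)=0$ or $n<\car(k)$; specialized to $n=2$ this reads $\car(k)=0$ or $\car(k)>2$, which (since $\car(k)$ must be $0$ or a prime) is exactly ``$\car(k)=0$ or $\car(k)$ odd.''

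Next I would relabel: set $A_1=A$ and $A_2=B$, so pairwise coprimality of $A_1,A_2$ is just the coprimality of $A$ and $B$. With $P=AB$, the lemma's quantities become
$$
\tilde A_1 = P/A_1 = B, \qquad \tilde A_2 = P/A_2 = A, \qquad F = P - \tilde A_1 - \tilde A_2 = AB - A - B = G.
$$
Lemma~\ref{lem:part_lowerbnd} then delivers two conclusions: the equation $x_1\tilde A_1+x_2\tilde A_2 = F$ has no solution with $x_1,x_2\in\mnc$, and $g(\tilde A_1,\tilde A_2) = g(B,A) = \deg A + \deg B$. Since the Frobenius degree $g(A_1,\ldots,A_n)$ and the notion of counter-example to \FPP\ are manifestly symmetric under permutation of the arguments (the defining equation \eqref{eq:frob_def} is unchanged by reindexing), it follows that $g(A,B) = \deg A + \deg B$ and that $G = AB - A - B$ is a counter-example to \FPP\ for $A,B$.

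The main (minor) obstacle is bookkeeping: one has to be careful that the polynomials $A,B$ of the corollary correspond to the $\tilde A_i$'s of the lemma rather than to the $A_i$'s, because the lemma's conclusion is stated in terms of the ``dual'' polynomials $\tilde A_i = P/A_i$. Once the relabeling is tracked and the symmetry of $g$ in its arguments is observed, the corollary is immediate; no further calculation is needed.
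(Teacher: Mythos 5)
Your proposal is correct and takes exactly the paper's approach: the paper itself introduces this corollary with the remark that it is ``the case $n=2$ of the previous result'' (Lemma~\ref{lem:part_lowerbnd}), and your careful tracking of the relabeling $\tilde A_1 = B$, $\tilde A_2 = A$ (plus symmetry of $g$ in its arguments) is precisely the bookkeeping the paper leaves implicit.
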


\begin{remark}
It is enlightening to compare this with  the classical Frobenius problem. In the latter case, Sylvester's celebrated  result shows that  $g(p,q)=pq-p-q$ for relatively prime positive integers $p$ and $q$. As we saw above, the natural translation of this formula over to $k[t]$  solves \FPP\ in dimension 2.
\end{remark}

The next result shows that for all $n\geq 2$ the upper bound in Remark \ref{rmk:upper_bound} cannot be improved.

\begin{lemma}
 Let $A_1,A_2,\ldots, A_n$ be coprime non-constant monic polynomials over a field $k$. Suppose $\car( k)=0$ or $n<\car(k)$. Suppose $\gcd(A_1,\ldots,A_{n-1})=D$ and $D\neq A_i$ for all $1\leq i\leq n-1$. If $\deg A_n> g(A_1/D,\ldots,A_{n-1}/D)$ then 
 $$
 g(A_1,\ldots,A_n)=\max\{\deg A_n,g(A_1/D,\ldots,A_{n-1}/D)\}+\deg D,
 $$ 
 or
 $$
 g(A_1,\ldots,A_n)=g(A_1,\ldots,A_{n-1}),
 $$
 if $D\neq 1$ or $D=1$, respectively.
\end{lemma}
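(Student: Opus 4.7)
The upper bound in each case comes directly from Remark~\ref{rmk:upper_bound}, so the task is to produce matching lower bounds by exhibiting explicit counter-examples. Two consequences of the characteristic hypothesis will be used throughout: the \emph{no-cancellation principle} --- if $x_1,\ldots,x_m\in\mnc$ with $m\le n$ and $\sum_i x_i A_i\ne 0$, then $\deg\bigl(\sum_i x_i A_i\bigr)=\max_i\deg(x_iA_i)$ --- and the observation that $|k|\ge 3$ (since $n\ge 2$ and $n<\car(k)$ forces $\car(k)\ge 3$ when positive), so one can choose constants outside $\{0,1\}$.

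For $D=1$, the conclusion to match is $g(A_1,\ldots,A_n)=g(A_1,\ldots,A_{n-1})$. I would take any counter-example $F$ to \FPP\ for $A_1,\ldots,A_{n-1}$ (one exists because the $A_i$ are non-constant) and verify that it remains a counter-example after adjoining $A_n$. In a hypothetical solution $\sum_{i=1}^n x_iA_i=F$ with $x_i\in\mnc$, the no-cancellation principle gives $\deg(x_nA_n)\le\deg F=g(A_1,\ldots,A_{n-1})<\deg A_n$, forcing $x_n=0$; this reduces the equation to a representation of $F$ by $A_1,\ldots,A_{n-1}$ in $\mnc$, contradicting the choice of $F$.

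For $D\ne 1$, the hypothesis collapses the $\max$ to $\deg A_n$, so the desired equality is $g(A_1,\ldots,A_n)=\deg A_n+\deg D$. I would choose constants $c,\beta\in k\setminus\{0,1\}$ and set
\[
 F \;:=\; (A_n+c)D+\beta A_n,
\]
which is monic of degree $\deg A_n+\deg D$. Assume for contradiction that $\sum x_iA_i=F$ with $x_i\in\mnc$. The no-cancellation principle gives $\deg x_n\le\deg D$; reducing modulo $D$ and using $\gcd(A_n,D)=1$ gives $x_n\equiv\beta\pmod D$, so writing $x_n=\beta+hD$ the degree bound forces $h\in k$, leaving only the possibilities $x_n=\beta$ or $x_n=D+\beta$. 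The first is a nonzero non-monic constant, so $x_n\notin\mnc$. For the second, substituting $x_n=D+\beta$ yields $\sum_{i<n} x_iA_i=cD$, which after dividing by $D$ becomes $\sum_{i<n} x_i\Atil_i=c$ with $\Atil_i:=A_i/D$. Each $\Atil_i$ is non-constant (because $A_i\ne D$), so a second application of the no-cancellation principle --- now to $n-1<\car(k)$ monic summands --- shows the left-hand side is either $0$ or monic of positive degree, never the nonzero constant $c$. Hence $F$ is a counter-example and $g(A_1,\ldots,A_n)\ge\deg A_n+\deg D$.

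The principal obstacle is the sub-case analysis on $x_n$ in the $D\ne 1$ case, where the mod-$D$ congruence $x_n\equiv\beta\pmod D$, the degree ceiling $\deg x_n\le\deg D$, and the $\mnc$-membership condition must all be reconciled simultaneously. The delicate point is ruling out the branch $x_n=D+\beta$: this rests essentially on the hypothesis $D\ne A_i$ (so every $\Atil_i$ is non-constant) together with a second use of the no-cancellation principle on the residual equation among the $\Atil_i$, which is what prevents a monic combination of non-constant polynomials from accidentally collapsing to the constant $c$.
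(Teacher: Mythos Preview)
Your proof is correct, and the $D=1$ branch matches what the paper sketches. In the $D\ne 1$ branch, however, you and the paper build \emph{different} counter-examples.

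The paper takes a counter-example $G$ for $\Atil_1,\ldots,\Atil_{n-1}$ with $\deg G=\gtil$ and sets
\[
F \;=\; DG+(D-1)A_n,
\]
then argues that any putative representation forces $x_n=D-1$ (from $D\mid(D-1-x_n)$ together with $\deg x_n\le\deg D$), which reduces to $\sum_{i<n}x_i\Atil_i=G$, contradicting the choice of $G$.

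Your construction $F=(A_n+c)D+\beta A_n$ avoids invoking a lower-dimensional counter-example altogether: the reduction lands on $\sum_{i<n}x_i\Atil_i=c$, and you kill this with the bare fact that the $\Atil_i$ are non-constant. This is more elementary and self-contained; it also makes transparent that the lower bound $g\ge\deg A_n+\deg D$ needs only $D\ne1$ and $D\ne A_i$, not the hypothesis $\deg A_n>\gtil$ (which you use only to collapse the $\max$). The paper's construction, by contrast, bakes the inductive structure into $F$ and yields a counter-example whose degree is visibly $\max\{\deg A_n,\gtil\}+\deg D$ even before one knows which term is larger.

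One small wording issue: in your last step the sum $\sum_{i<n}x_i\Atil_i$ need not be \emph{monic} when several terms share the top degree---its leading coefficient is the count of such terms. What you actually need, and what the no-cancellation principle gives, is that the sum is either $0$ or of \emph{positive degree}; that already contradicts equality with the nonzero constant $c$. The argument stands as written once ``monic'' is dropped.
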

\begin{proof}
We assume that $D\neq 1$, since the case $D=1$ is simpler and can be proved in a similar way.
Remark \ref{rmk:upper_bound} provides us with the upper bound
 $$
  g(A_1,\ldots,A_n)\leq \max\{\deg A_n,g(A_1/D,\ldots,A_{n-1}/D)\}+\deg D.
 $$ Let us show that  equality above  holds by constructing a counter-example of \FPP\  for $A_1,\ldots, A_n$ with the appropriate degree. 
 
 We write $\gtil=g(A_1/D,\ldots,A_{n-1}/D)$.  Let $G$ with $\deg G=\gtil$ be a counterexample for the Frobenius problem for $A_1/D,\ldots,A_{n-1}/D$ (which exists since $A_i/D\neq 1$ for all $i$). We will show that the equality
\begin{equation}\label{eq:ce}
 x_1A_1+\cdots+x_nA_n=DG+(D-1)A_n
\end{equation}
does not hold with $x_1,\dots,x_n\in\mnc$. Assume that the opposite happens. Since by hypothesis $\deg A_n>\gtil$,  comparison of degrees in \eqref{eq:ce} yields $\deg x_n\leq \deg D$. This fact together with $\gcd(D,A_n)=1$ and 
 $$
D\left(x_1\frac{A_1}{D}+\ldots +x_{n-1}\dfrac{A_{n-1}}{D}-G\right)=(D-1-x_n)A_n
$$
imply that $x_n=D-1$. Consequently,
$$
x_1\frac{A_1}{D}+\cdots +x_{n-1}\dfrac{A_{n-1}}{D}=G,
$$
which contradicts the fact that $G$ is a counter-example of \FPP\ for the polynomials $A_1/D,\ldots,A_{n-1}/D$. Since 
$$\deg(DG+(D-1)A_n)=\max\{\deg A_n,g(A_1/D,\ldots,A_{n-1}/D)\}+\deg D,
$$ the result follows.
\end{proof}

\section{The type-denumerant function}\label{sec:denumerant}

The following is a natural problem closely related to the classical FP:
\begin{itemize}
\item Given a positive integer $f$ find the number $d(f;a_1,\ldots,a_n)$ of solutions of $x_1a_1+\cdots+x_na_n=f$ with integers $x_i\geq 0$.\footnote{The function $d(f;a_1,\ldots,a_n)$ is known as the \emph{denumerant} function.}
\end{itemize}

In this section we provide an analogous statement  in the 2-dimensional case  to the following  classical result associated to the above problem.
\begin{theorem}[Schur, see Theorem 4.2.1 in \cite{Alfonsin_book}]\label{the:schur}
Let $a_1,\ldots,a_n$ be coprime positive integers and let $P_n=\prod_{i=1}^n a_i$. Then,
$$
d(f;a_1,\ldots,a_n) \sim \dfrac{f^{n-1}}{P_n(n-1)!}, \text{ as $m\longrightarrow \infty$}.
$$
\end{theorem}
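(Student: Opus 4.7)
The plan is to prove Schur's asymptotic by analyzing the generating function of the denumerant. The starting point is the standard identity
$$
D(z) := \sum_{f \geq 0} d(f;a_1,\ldots,a_n)\, z^f = \prod_{i=1}^n \frac{1}{1-z^{a_i}},
$$
which exhibits $D(z)$ as a rational function whose only poles lie at roots of unity. The first step is to isolate the dominant pole at $z=1$ via the factorisation $1-z^{a_i}=(1-z)(1+z+\cdots+z^{a_i-1})$, giving
$$
D(z)=\frac{1}{(1-z)^n}\cdot\prod_{i=1}^n \frac{1}{1+z+\cdots+z^{a_i-1}}.
$$
The second factor is analytic at $z=1$ and evaluates there to $1/P_n$, so $D(z)\sim\tfrac{1}{P_n(1-z)^n}$ as $z\to 1$. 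Heuristically, this already predicts the leading term $f^{n-1}/(P_n(n-1)!)$.

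To turn this into a rigorous asymptotic I would write the full partial-fraction expansion
$$
D(z)=\sum_{j=1}^{n}\frac{c_j}{(1-z)^j} + \sum_{\zeta\neq 1} R_\zeta(z),
$$
where $c_n=1/P_n$, the remaining $c_j$ come from the Laurent expansion of $D$ at $1$, and the $R_\zeta(z)$ collect contributions at the other poles $\zeta$, which are certain non-trivial roots of unity. This is where the coprimality hypothesis $\gcd(a_1,\ldots,a_n)=1$ enters: for any $\zeta\neq 1$, not every factor $1-z^{a_i}$ vanishes at $\zeta$ (else $\zeta$ would be a common root of every $z^{a_i}-1$, forcing $\zeta^{\gcd(a_i)}=\zeta=1$), so the pole of $D$ at $\zeta$ has order at most $n-1$.

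Extracting the coefficient of $z^f$ then yields a formula of the shape
$$
d(f;a_1,\ldots,a_n)=\frac{1}{P_n}\binom{f+n-1}{n-1}+\sum_{j=1}^{n-1}c_j\binom{f+j-1}{j-1}+\sum_{\zeta\neq 1}Q_\zeta(f)\,\zeta^{-f},
$$
with each $Q_\zeta$ a polynomial in $f$ of degree at most $n-2$. Since $\binom{f+n-1}{n-1}\sim f^{n-1}/(n-1)!$ while every remaining summand is $O(f^{n-2})$ (the oscillatory factors $\zeta^{-f}$ having modulus $1$), dividing through delivers the claimed asymptotic.

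The main obstacle I anticipate is purely bookkeeping: verifying that the leading Laurent coefficient at $z=1$ is indeed $1/P_n$ (a direct computation from the factorisation above), and that the secondary poles collectively contribute only $O(f^{n-2})$. The coprimality hypothesis is invoked essentially once, and it is exactly what prevents a second pole of order $n$ from contaminating the leading term.
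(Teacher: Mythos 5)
The paper itself offers no proof of this statement: it is quoted verbatim as a known result of Schur and attributed to Theorem 4.2.1 of \cite{Alfonsin_book}, so there is no in-text argument to compare against. Your generating-function proof is the standard one and, as far as I can tell, correct: the identity $D(z)=\prod_i(1-z^{a_i})^{-1}$ is correct; pulling out the factor $(1-z)^{-n}$ and evaluating the remaining product at $z=1$ gives the leading Laurent coefficient $1/P_n$; the coprimality hypothesis is used in exactly the right place to guarantee that every pole $\zeta\neq 1$ (a root of unity dividing some but not all of the $a_i$) has order at most $n-1$; and the coefficient extraction $[z^f](1-z)^{-j}=\binom{f+j-1}{j-1}$ together with $|\zeta|=1$ shows that all subsidiary contributions are $O(f^{n-2})$, leaving $\frac{1}{P_n}\binom{f+n-1}{n-1}\sim\frac{f^{n-1}}{P_n(n-1)!}$ as the dominant term. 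The only loose end, as you already flag, is the routine bookkeeping: one should say explicitly that there are finitely many poles (the $a_i$-th roots of unity) so the error constants are uniformly bounded, and one might note the harmless typo in the statement itself ($m\to\infty$ should read $f\to\infty$). None of this is a gap; your argument is a complete and standard proof of the cited theorem.
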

Before translating such result to \FPP, we make the following observation. 
Cf. Theorem \ref{the:pos_char}, for a fixed monic polynomial $F$, \eqref{eq:frob_def} has an infinite number of solutions with $x_i\in\mnc$; unless $k$ is finite and $\car(k)> n$. 
We circumvent this difficulty through the following definition.

\begin{definition}
Let $A_1,\ldots,A_n$ be coprime monic polynomials.  For a fixed monic polynomial $F$, we define the \emph{type of a solution $\xx=(x_1,\ldots,x_n)\in(\mnc)^n$} of
\eqref{eq:frob_def} as the $n$-tuple $T(\xx)=(\deg x_1,\ldots,\deg x_n)$. The number of types associated to $F$ is given by the \emph{type-denumerant} function $\mathcal{T}(F;A_1,\ldots,A_n)$.
\end{definition}

\begin{remark}
Over a field $k$ of characteristic $p$, \eqref{eq:upboundxi} and Theorem \ref{the:pos_char} imply that $\mathcal{T}(F;A_1,\ldots,A_n)$ is finite if and only if  $n<p$ or $p=0$. Thus, if $p\leq n$,  $\mathcal{T}(F;A_1,\ldots,A_n)$ is still not analogous to the classical denumerant function.
\end{remark}

A natural guess for an analogous statement over $k[t]$ of Theorem \ref{the:schur} can be obtained if we replace  $f,a_1,\ldots,a_n$ by polynomials and the right hand-side of the asymptotic formula by an expression involving the degrees of such polynomials, as follows:
 $$
 \mathcal{T}(F;A_1,\ldots,A_n) \sim (n-1)\deg F-\sum_{i=1}^n\deg A_i, \quad \text{as $\deg F\longrightarrow \infty$}.
 $$
As we show below, this formula is invalid already  for  $n=2$. Nonetheless, our asymptotic formula for $\mathcal{T}(F;A_1,A_2)$  is still in line with what one would expect by considering  Theorem \ref{the:schur} as  a starting point. It is still unclear what an asymptotic value for  $\mathcal{T}(F;A_1,\ldots,A_n)$ should be.
\begin{theorem}
Let $A$ and $B$ be coprime monic polynomials over a field of zero or odd charachteristic. Then, for some integer $0\leq C\leq 2$,
\[
\mathcal{T}(F;A,B)=2(\deg F-\deg A-\deg B)+C, 
\]
if $\deg F> A+B$.
In particular,
$$
\mathcal{T}(F;A,B)\sim 2\deg F-\deg A-\deg B, \text{ as $\deg F\longrightarrow \infty$}.
$$
\end{theorem}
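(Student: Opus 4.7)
Throughout, set $a = \deg A$, $b = \deg B$, $f = \deg F$. The plan is to split all solutions $(x, y) \in (\mnc)^2$ of $xA + yB = F$ into two disjoint classes according to leading behavior, count the distinct types in each class via the one-parameter family of solutions used in Lemma \ref{lem:frob_dim2}, and then add.

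First I observe that since $A, B$ are monic and each nonzero element of $\mnc$ is monic, the polynomials $xA$ and $yB$ are monic whenever they are nonzero. If both had the common degree $f$, their sum would have leading coefficient $2$, which is nonzero in characteristic $\ne 2$ and contradicts the monicity of $F$. Hence every solution lies in exactly one of: Case (A) with $\deg x = f - a$ and either $y = 0$ or $\deg y < f - b$; or Case (B) with $\deg y = f - b$ and either $x = 0$ or $\deg x < f - a$.

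To count Case (B), I take the distinguished solution $(x_0, y_0)$ from Lemma \ref{lem:frob_dim2} with $\deg x_0 < b$ (or $x_0 = 0$) and $y_0$ monic of degree $f - b$. Every solution then has the form $(x_0 + uB, y_0 - uA)$, and a direct degree comparison shows that Case (B) corresponds precisely to $u = 0$ together with the monic $u$ of each degree $d \in \{0, 1, \ldots, f - a - b - 1\}$. The monic nonzero choices produce the $f - a - b$ pairwise distinct types $(b + d, f - b)$, and $u = 0$ contributes one additional type (namely $(\deg x_0, f - b)$ or $(-\infty, f - b)$) iff $x_0 \in \mnc$; I record this as $\epsilon_B \in \{0, 1\}$. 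A symmetric argument swapping $A \leftrightarrow B$ and $x \leftrightarrow y$ yields $f - a - b + \epsilon_A$ types in Case (A), with $\epsilon_A \in \{0, 1\}$ determined by whether the residue of $y_0$ modulo $A$ lies in $\mnc$.

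Since every Case (A) type has first coordinate equal to $f - a$ while every Case (B) type has first coordinate different from $f - a$, the two classes are disjoint, yielding $\mathcal{T}(F; A, B) = 2(f - a - b) + C$ with $C = \epsilon_A + \epsilon_B \in \{0, 1, 2\}$; the stated asymptotic follows immediately. The main bookkeeping obstacle is handling the three sub-cases ($x_0 = 0$, $x_0$ monic, $x_0$ nonzero non-monic) for the extra type from $u = 0$, while the characteristic hypothesis enters only at the step that excludes simultaneous maximal degrees in $xA$ and $yB$.
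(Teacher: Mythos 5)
Your proof is correct and follows essentially the same route as the paper's: both arguments split the types according to which summand achieves degree $f$ (the characteristic $\neq 2$ hypothesis guaranteeing disjointness), exhibit $f-a-b$ types in each class via the one-parameter family $(x_0+uB,\,y_0-uA)$, and account for at most one additional ``small'' type per class via the uniqueness of the residue modulo $A$ (resp.\ $B$). The only cosmetic difference is your choice of base point: you shift from the particular solution with $\deg x_0 < b$ coming from the proof of Lemma~\ref{lem:frob_dim2}, whereas the paper shifts from the solution of type $(f-a,a)$ supplied by Remark~\ref{rmk:deg_sol}; the resulting bookkeeping is identical, and your indicators $\epsilon_A,\epsilon_B$ match the paper's $\chi_{(A,B)}(F),\chi_{(B,A)}(F)$.
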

\begin{proof}
We  let $f=\deg F$, $a=\deg A$ and $b=\deg B$. Assume first that $f=\deg x+a$.  Then, the possible types in this case are of the form $(f-a,c)$, with $c=-\infty$ or $0\leq c< f-b$.  If $f>a+b$, then by Remark \ref{rmk:deg_sol} we know there is a solution of type $(f-a,a)$, say $(x_0,y_0)$.  Let $z$ be some monic polynomial with $1\leq \deg z <f-a-b$.  Then, $(x_0-zB,y_0+zA)$ is a solution of type $(f-a,\deg z+a)$.  In this fashion we can obtain all types of the form $(f-a,c)$ with $a\leq c<f-b$.  This leaves out solutions of type $(f-a,m)$ with $ m<a$.  Assume $(x',y')$ is another solution of $xA+yB=F$.  Then, $(x-x')A=(y'-y)B$ and $y'\equiv y \mod A$.  If $\deg y,\deg y'<a$ then $y=y'$.  Thus, there is at most one solution of type $(f-a,m)$ with $m<a$. We then have: if $f>a+b$ then the number of types of solutions with type $(f-a,c)$ is $f-a-b+\chi_{(A,B)}(F)$, where $\chi_{(A,B)}(F)\in\{0,1\}$ is the number of types $(f-a,c)$ with $c<a$.  The above argument applies to solutions of type $(c,f-b)$ as well, and the result follows.
\end{proof}

\begin{example}
Let  $\chi_{(A,B)}(F)\in\{0,1\}$ be the number of types $(f-a,c)$ with $c<a$ and $x$ be a monic polynomial with $\deg x\geq 1$.  Then,
\begin{itemize}
\item If $F=xAB$ then $\chi_{(A,B)}=\chi_{(B,A)}=1$ since we have types of the form $(f-a,-\infty)$ and $(-\infty,f-b)$.
\item If $F=xBA+(A+2)B$, then, $\chi_{(A,B)}=0$ since 2 is the only polynomial of degree  $<a$ congruent to $(A+2)$ mod $A$.  $\chi_{(B,A)}$ is obviously $1$.
\item If $F=(xB+2)A+(A+2)B$ then $\chi_{(A,B)}=\chi_{(B,A)}=0$.
\end{itemize}
\end{example}
Looking at the examples above, we can see that $\chi_{(A,B)}=1$ if, and only if, for any solution $(x,y)$ of type $(f-a,c)$, we have that the representative of $y\mod{A}$ of degree less than $a$ is in $\mnc$.


%

\section{An algorithm to solve \FPP}\label{sec:algorithm}
\subsection{Set-up and notation}
In this section we construct an algorithm to compute $g(A_1,\ldots,A_n)$. To avoid trivial cases, we assume $1\notin\{A_1,\ldots,A_n\}$ and that the characteristic of the base field is either 0 or greater than $n$. 
Our algorithm to solve \FPP\  is based on the fact that given a fixed polynomial $F$, one can decide whether or not \eqref{eq:frob_def} has a solution $(x_1,\ldots,x_n)$ by solving a system of linear equations that is obtained from  considering the coefficients of the polynomials in $(x_1,\ldots,x_n)$ as variables. In order to make the above idea more precise, we use the following notation:
\begin{itemize}
\item We write $a_i=\deg A_i$ and
$$
A_i=t^{a_i}+\sum_{j=0}^{a_i-1}\alpha_{ij}t^j.
$$
\item $d$ is a positive integer satisfying $d\geq \min\{a_i: 1\leq i\leq n\}$.
\item $\mathcal{P}_d$ is the $k$-vector space of polynomials of degree $\leq d$. 
\item  In $k^{d+1}$, we identify a polynomial   $\sum_{k=0}^e \psi_it^i\in\mathcal{P}_d$ of degree $e$ with either the vector
\begin{equation}\label{eq:col_id}
(\underbrace{0,\ldots,0}_{d-e},\underbrace{\psi_e,\psi_{e-1},\ldots,\psi_0}_{e+1}),
\end{equation}
or a column matrix, which is the transpose of the vector above. We note that  often we use the polynomial and matrix representation of an element in $\mathcal{P}_d$ interchangeably.
\item If $A_i\in\pd$, we let $D_i$ be the column matrix associated to $A_i$ under the above identification of $\mathcal{P}_d$ with $k^{d+1}$.
\item $\md$ is the set  of  monic polynomials of degree $d$.
\item $\fd $ is the set of monic polynomials $F$ of degree $d$ for which  \eqref{eq:frob_def} has a solution with $x_i\in\mnc$. Notice that $d=g(A_1,\ldots,A_n)$ is the largest integer $d$ for which $\fd \subsetneq \md$.
\item $\T=\T(a_1,\ldots,a_n)$ is the set of $n$-tuples $(e_1,\ldots,e_n)$ such that:
\begin{enumerate}
\item there exists a unique integer $j=j(T)$ such that $1\leq j\leq n$ and $e_j=d-a_j\geq 0$; and
\item for $i\neq j$, we have $e_i=-\infty$ or $e_i$ is an integer satisfying $0\leq e_i <d-a_i$.
\end{enumerate}
\item We consider the following subsets of the integers $1\leq i\leq n$:
\begin{equation}\label{eq:rs}
\rt=\{i:0< e_i\leq d-a_i\}\quad \text{ and } \quad \st=\{i:e_i=0\}.
\end{equation}
Notice that if $\rt=\emptyset$ then $d=a_{j(T)}$, and consequently $\st\neq \emptyset$. Also, if $i\notin \rt\cup \st$ then $e_i=-\infty$.
\item  We let the \emph{index of $T$} to be $\ind(T)=\sum_{i=1}^n\max(e_i,0)$. Observe that $\rt\neq\emptyset$ if and only if $\ind(T)>0$. If that is the case, then $\ind(T)=\sum_{i\in\rt}e_i$.
\end{itemize}
\begin{remark}
The elements of $\T$ are related to \FPP\ in the following way. Any  $\xx=(x_1,\ldots,x_n)\in(\mnc)^n$ such that $(\deg x_1,\ldots,\deg x_n)\in \T$ yields a solution to \eqref{eq:frob_def}, for some  monic polynomial $F$ of degree $d$. Conversely, given a monic polynomial $F$ of positive degree $d$, a solution $\xx=(x_1,\ldots,x_n)\in(\mnc)^n$ of \eqref{eq:frob_def} yields the $n$-tuple $(\deg x_1,\ldots,\deg x_n)\in\T$. 
\end{remark}

The strategy of our algorithm is to run through all integers $d$ which are not larger than the upper bound given by Remark \ref{rmk:min_up_bnd}, and find the largest $d$ for which $\fd\subsetneq\md$. In order to follow this strategy, we need to find criteria to decide whether or not $\fd = \md$. In this section,  we give such a criteria. It is based on the fact that  $\fd $  is a finite union of affine subspaces of $\pd$.  
\begin{definition}
Let $\mathcal{V}$ be a finite dimensional $k$-vector space. We say that $\mathcal{A}$ is an \emph{affine subspace} of $\mathcal{V}$ if  there exist a vector subspace $\mathcal{U}\subset \mathcal{V}$ and a vector $\mathbf{v}\in\mathcal{V}$ such that $\mathcal{A}=\mathcal{U}+\mathbf{v}$. The  \emph{dimension} of $\mathcal{A}$, $\dim \mathcal{A}$, is defined to be $\dim \mathcal{U}$.
\end{definition}
\begin{remark} In the sequence, we use the following easy facts about affine subspaces whose proofs are left  to the reader.
\begin{enumerate}
\item Let $\A$ and $\mathcal{B}$ be affine subspaces with $\dim\A=\dim \mathcal{B}$. If $\A\subset \mathcal{B}$ then $\A=\mathcal{B}$.
\item  If $\uu,\vv\in\A$ and $\alpha\in k$ then $(1-\alpha)\uu+\alpha\vv$ is also an element of $\A$.
\item Notice that inside  $k^{d+1}$, the set $\md$ of  monic polynomials of degree $d$ is the affine subspace 
$(1,0,\ldots,0)+(0,\psi_d,\psi_{d-1},\ldots,\psi_0)$, and $\dim \md=d$.
\end{enumerate}
\end{remark}


For each $T\in \T$, below we define a matrix $A_T$ and a column matrix $B_T$, both with $d+1$ rows. Ultimately,  we associate to each $T$ the affine space given by the translation of the column space of $A_T$ by the vector $B_T$. 
In case $\rt=\emptyset$, we define $A_T$ and $B_T$ to be  the zero matrix  of order $(d+1)\times 1$ and $\sum_{i\in \st}D_i$, respectively.  Otherwise, for $i\in \rt$, we first define the following $(d+1)\times (e_i+1)$ matrix
$$
M_i={\begin{bmatrix}
0 & 0 & 0 &\cdots & 0\\
\vdots & \vdots &\vdots  & \ddots &\vdots\\
0 & 0 &0 &\cdots &0\\
1 & 0 & 0 &\cdots & 0\\
\alpha_{i(a_i-1)} & 1 &0 & \dots & 0\\
\alpha_{i(a_i-2)}  & \alpha_{i(a_i-1)} &1  &\ddots & \vdots\\
\vdots & \alpha_{i(a_i-2)}  &\alpha_{i(a_i-1)}  & \ddots & 0\\
\alpha_{i1} & \vdots & \alpha_{i(a_i-2)} &  \ddots & 1\\
\alpha_{i0} & \alpha_{i1} &\vdots   & \ddots & \alpha_{i(a_i-1)}\\
0 & \alpha_{i0} &\alpha_{i1}  &\vdots &\alpha_{i(a_i-2)} \\
0 & 0 &\alpha_{i0}  &\ddots &\vdots \\
\vdots&\vdots&\vdots&\ddots &\alpha_{i1}\\
0&0&0&\cdots &\alpha_{i0}
\end{bmatrix}}
$$
where the $j$-th column of $M_i$ is the vector representation in $k^{d+1}$ of the polynomial $A_it^{e_i-j+1}$, for $1\leq j\leq e_i+1$. Let $\bar{M}_i$ be the $(d+1)\times e_i$ matrix obtained from $M_i$ by removing its first column $C_{i}$.  We define $A_T$ to be the block row matrix of order $(d+1)\times\ind(T)$
$$
A_{T}=
[\bar{M}_i]_{i\in \rt}
$$
and $B_T$ to be the column matrix  of order $(d+1)\times 1$ 
$$
B_T=\sum_{i\in \rt} C_{i}+\sum_{i\in \st}D_i,
$$
where $\sum_{i\in \st}D_i$ is defined to be the zero vector, if $\st=\emptyset$.

\begin{remark}\label{rmk:about_VT}
\begin{enumerate}
\item If $x_i=\sum_{j=0}^{e_i} \chi_{ij}t^j$ is a polynomial  with $\deg x_i=e_i$, then  the product of polynomials $x_iA_i$ is an element of $\pd$ and can be identified under \eqref{eq:col_id} with the product of matrices $M_iX_i$, where $X_i$ is the column vector $(\chi_{ie_i},\ldots,\chi_{i0})$.
\item We have that $\rank A_T\leq d$. This is obvious if $\rt=\emptyset$. If $\rt\neq\emptyset$,  first notice that the highest possible rank for the matrices $M_i$  can only happen when the first entry in $C_i$ is 1. Even in this case,  if we remove the first column $C_i$ of $M_i$, we are left with the matrix $\bar{M}_i$  whose first row is zero. Consequently, $A_T$ has at most $d$ non-zero rows.
\item For any $T\in\T$, the first entry in $B_T$ is 1. Therefore, if $\mathcal{V}_T$ is the column space of the matrix $A_{T}$, then  $\mathcal{V}_T+B_T\subset \md$ under the identification given by \eqref{eq:col_id}. 
\end{enumerate}

\end{remark}

We are now ready to prove the first main result of this section.

\begin{theorem}\label{the:union_aff}
Let $d$, $\fd$, $A_T$ and $B_T$ be defined as above. Then $\fd$ is the union of a finite number of affine subspaces of $\pd$. More precisely, under the identification of $\pd$ with $k^{d+1}$,
$$
\fd=\bigcup_{T\in\T}(\mathcal{V}_T+B_T),
$$
where $\mathcal{V}_T$ is the column space of the matrix $A_{T}$. 
\end{theorem}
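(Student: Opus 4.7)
The plan is to exhibit a natural correspondence between monic decompositions $F = x_1 A_1 + \cdots + x_n A_n$ with $x_i \in \mnc$ and pairs $(T, \mathbf{y})$ where $T \in \T$ and $\mathbf{y} \in k^{\ind(T)}$, and then observe that for fixed $T$ the map $\mathbf{y} \mapsto F$ traces out exactly the affine subspace $\mathcal{V}_T + B_T$.

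First I would prove the easy inclusion $\bigcup_{T \in \T}(\mathcal{V}_T + B_T) \subseteq \fd$. Fix $T = (e_1,\ldots,e_n) \in \T$ and a vector $\mathbf{y} \in k^{\ind(T)}$, which we split into blocks $Y_i \in k^{e_i}$ for $i \in \rt$. For $i \in \rt$, let $x_i$ be the monic polynomial of degree $e_i$ whose lower coefficients are recorded by $Y_i$; for $i \in \st$ let $x_i = 1$; and for $i \notin \rt \cup \st$ let $x_i = 0$. By item (1) of Remark \ref{rmk:about_VT}, the product $x_i A_i$ corresponds under the identification (\ref{eq:col_id}) to $C_i + \bar{M}_i Y_i$ when $i \in \rt$, and to $D_i$ when $i \in \st$. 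Summing gives exactly $A_T \mathbf{y} + B_T$, which therefore represents a monic polynomial of degree $d$ (its top entry is $1$ by item (3) of Remark \ref{rmk:about_VT}) lying in $\fd$.

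Next I would prove the reverse inclusion $\fd \subseteq \bigcup_{T \in \T}(\mathcal{V}_T + B_T)$. Suppose $F \in \fd$ has a representation $F = \sum_{i=1}^n x_i A_i$ with $x_i \in \mnc$, and set $e_i = \deg x_i$ (with $\deg 0 = -\infty$). The key step is to verify that $T := (e_1,\ldots,e_n)$ is an element of $\T$. Let $I = \{i : e_i + a_i = d\}$; because $x_i$ and $A_i$ are monic when $x_i \neq 0$, the coefficient of $t^d$ on the left is $|I| \cdot 1_k$. Since $F$ is monic of degree $d$ and since $\car(k) = 0$ or $\car(k) > n \geq |I|$, this forces $|I| = 1$. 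Writing $I = \{j\}$ we get $e_j = d - a_j \geq 0$, and every other nonzero $x_i$ satisfies $0 \leq e_i < d - a_i$; the remaining indices have $e_i = -\infty$. This is exactly the defining condition of $T \in \T$. Now reading the construction of $A_T$ and $B_T$ backwards, the vector $\mathbf{y}$ built from the lower coefficients of $x_i$ for $i \in \rt$ satisfies $A_T \mathbf{y} + B_T = F$ under (\ref{eq:col_id}). Hence $F \in \mathcal{V}_T + B_T$.

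The two inclusions together give the theorem, and as a by-product finiteness of the union is immediate since $\T$ is finite. The one genuinely delicate point is the uniqueness-of-leading-index argument in the second inclusion, which is exactly where the standing hypothesis on $\car(k)$ enters; without it several terms of top degree could cancel and no single index $j = j(T)$ would stand out, so the parametrization of $\fd$ by $\T$ would break down.
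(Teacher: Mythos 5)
Your argument is correct and follows essentially the same route as the paper: the forward inclusion is the matrix computation $M_iX_i = C_i + \bar M_iY_i$ and its summation to $A_T\mathbf{y}+B_T$, and the reverse inclusion rests on verifying that the degree tuple of any monic solution lands in $\T$. You spell out the uniqueness of the leading index, which the paper asserts without proof in the Remark preceding the theorem; just note that the same hypothesis on $\operatorname{char}(k)$ is also needed one step earlier, to rule out $\deg(x_iA_i)>d$ for any $i$, before the coefficient of $t^d$ on the left can be identified with $|I|\cdot 1_k$.
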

\begin{proof}
All we need to do is to prove the equality $\fd=\bigcup_{T\in\T}(\mathcal{V}_T+B_T)$.

Let $T=(e_1,\ldots,e_n)\in \T$, and let $\rt$ and $\st$ be defined as in \eqref{eq:rs}. For such $T$, we construct an $n$-tuple $(x_1,\ldots,x_n)\in(\mnc)^n$ such that $T=(\deg x_1,\ldots,\deg x_n)$. First, we let $x_i=1$ or $x_i=0$, if $i\in S$ or $i\notin \rt\cup \st$, respectively. Otherwise $i\in \rt$, and we can choose $x_i$ to be any monic polynomial of degree $e_i$
$$
x_i=t^{ e_i}+\sum_{j=0}^{e_i-1} \chi_{ij}t^j.
$$
Consider the $(e_i+1)\times 1$ matrix
$$
X_i=\begin{bmatrix}
1\\
\chi_{i(e_i-1)}\\
\vdots\\
\chi_{i0}
\end{bmatrix},
$$
and let $\bar{X}_i$ be the $e_i\times 1$ matrix obtained from $X_i$ by removing its first row. By definition of $\T$,  the product $x_iA_i$ is a polynomial of degree $\leq d$, which we identify with a $(d+1)\times 1$ column matrix, as in \eqref{eq:col_id}.  The column matrix $x_iA_i$ is the zero matrix if $i\not\in \rt\cup \st$; it is the matrix $D_i$, if $i\in \st$; and if $i\in \rt$, it is equal to the product of matrices $M_iX_i$.

In case $\rt\neq\emptyset$, the above discussion shows that  $\sum_{i=1}^n x_iA_i$, when identified   with the $(d+1)\times 1$ column matrix in \eqref{eq:col_id}, satisfies
\begin{eqnarray*}
\sum_{i=1}^n x_iA_i & =& \sum_{i\in R} M_iX_i+\sum_{i\in S} D_i\\
&=&\sum_{i\in R} C_{i}+\sum_{i\in R}\bar{M}_i\bar{X}_i+\sum_{i\in S} D_i\\
&=&\sum_{i\in R}\bar{M}_i\bar{X}_i+B_T
\end{eqnarray*}
It follows from basic properties of matrices that $\sum_{i\in R}\bar{M}_i\bar{X}_i$ is a linear combination of the columns of the matrix $A_T$. Therefore,   under the identification in \eqref{eq:col_id}, the set of linear combinations $\sum_{i=1}^n x_iA_i$ such that $(x_1,\ldots,x_n)\in(\mnc)^n$,  $(\deg x_1,\ldots,\deg x_n)\in\T$ and $\mathcal{R}_{(\deg x_1,\ldots,\deg x_n)}\neq \emptyset$ is equal to 
$$
\bigcup_{T\in\T \atop \rt\neq\emptyset}(\mathcal{V}_T+B_T).
$$ 

When $\rt=\emptyset $, then 
$$
\sum_{i=1}^n x_iA_i=\sum_{i\in S} D_i=A_T+B_T,
$$
which shows that, in any case, the set of linear combinations $\sum_{i=1}^n x_iA_i$ such that $(x_1,\ldots,x_n)\in(\mnc)^n$ and $(\deg x_1,\ldots,\deg x_n)\in\T$ is equal to $\bigcup_{T\in\T}(\mathcal{V}_T+B_T)$.

On the other hand, from the definition of $\T$, it follows that the set of linear combinations $\sum_{i=1}^n x_iA_i$ such that $(x_1,\ldots,x_n)\in(\mnc)^n$ and $(\deg x_1,\ldots,\deg x_n)\in\T$ is equal to $\fd$. Therefore, $\fd=\bigcup_{T\in\T}(\mathcal{V}_T+B_T)$ as desired.
\end{proof}

The second main result of this section gives a criteria to decide whether $\fd=\md$. It is a consequence of the description of $\fd$ contained in the previous result and the fact that a vector space cannot be covered by a finite union of proper subspaces.
\begin{lemma}\label{lem:covering}
 Let $\A$ be an affine space over a field $k$, and let $\mathcal{U}_i\subset \A$ be proper affine subspaces, for $i$ in an indexing set $I$. If $\A=\bigcup_{i\in I} \mathcal{U}_i$ then $|I|\geq |k|+1$.
\end{lemma}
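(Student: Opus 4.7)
The plan is to prove this by contradiction via a ``pick a line'' argument: assume $\A=\bigcup_{i\in I}\mathcal{U}_i$ with $|I|\le|k|$, and produce a point of $\A$ missed by every $\mathcal{U}_i$.

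First, I would pass to a minimal subcover, discarding any $\mathcal{U}_i$ that is already contained in the union of the remaining ones; this only decreases $|I|$. By minimality I can choose an index $i_0\in I$ together with a point $p\in\mathcal{U}_{i_0}\setminus\bigcup_{j\ne i_0}\mathcal{U}_j$, and (because $\mathcal{U}_{i_0}$ is a proper affine subspace of $\A$) some $q\in\A\setminus\mathcal{U}_{i_0}$.

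Next, I would consider the affine line $L=\{p+\alpha(q-p):\alpha\in k\}\subset\A$, which has exactly $|k|$ distinct points. The key observation is that $|L\cap\mathcal{U}_j|\le 1$ for every $j\in I$: two distinct points of an affine subspace determine the line through them, so if $|L\cap\mathcal{U}_j|\ge 2$ then $L\subseteq\mathcal{U}_j$, which forces $p\in\mathcal{U}_j$ (contradicting the choice of $p$ when $j\ne i_0$) or $q\in\mathcal{U}_{i_0}$ (contradicting the choice of $q$ when $j=i_0$). Combining these facts yields
\[
|k|=|L|\le\sum_{j\in I}|L\cap\mathcal{U}_j|\le|I|.
\]

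The main obstacle I anticipate is squeezing out the final unit in the bound $|I|\ge|k|+1$: the line-counting argument above only delivers $|I|\ge|k|$. To obtain the extra $+1$ I would linearize the problem by embedding $\A$ as the affine hyperplane $\A\times\{1\}$ inside the vector space $V=\A\oplus k$ (after fixing an origin of $\A$), replacing each affine $\mathcal{U}_i=W_i+v_i$ by the \emph{linear} subspace $\tilde{W}_i=\{(w+tv_i,t):w\in W_i,\ t\in k\}\subset V$, which is proper in $V$ and satisfies $\tilde{W}_i\cap(\A\times\{1\})=\mathcal{U}_i\times\{1\}$. Because each $\tilde{W}_i$ is closed under scalar multiplication, an assumed covering of the slice $\A\times\{1\}$ extends to a covering of all slices $\A\times\{t\}$ with $t\ne 0$, and the situation reduces to the classical linear-algebra covering lemma—a $k$-vector space is never a union of fewer than $|k|+1$ proper linear subspaces—whose standard proof (pick $x$ in exactly one hyperplane and $y$ off it, then analyze the line $\{y+\alpha x:\alpha\in k\}$) upgrades the affine $|k|$ count to the desired $|k|+1$.
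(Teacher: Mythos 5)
Your line-counting argument is correct and yields $|I|\ge|k|$; this is essentially the same argument the paper itself runs inline inside the proof of Theorem~\ref{the:criteria} (the paper's stated ``proof'' of this lemma is merely a citation to Clark's notes). The trouble is entirely in the second half of your proposal, where you try to upgrade the bound to $|k|+1$.

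The linearization step does not work. Your subspaces $\tilde{W}_i$ do cover every slice $\A\times\{t\}$ with $t\ne 0$ (given $x\in\A$ and $t\ne 0$, pick $i$ with $x/t\in\mathcal{U}_i=W_i+v_i$, write $x/t=w+v_i$, and observe $x=tw+tv_i$ with $tw\in W_i$), but there is no reason they should cover the slice $\A\times\{0\}=\{(w,0):w\in\A\}$: that would require the \emph{direction} spaces $W_i$ themselves to cover $\A$, which is not part of the hypothesis. Concretely, take $\A=\mathbb{F}_2^{2}$ covered by the two parallel lines $\mathcal{U}_1=\{x=0\}$ and $\mathcal{U}_2=\{x=1\}$. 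Then $W_1=W_2=\{x=0\}$, $v_1=(0,0)$, $v_2=(1,0)$, and the vector $(1,0,0)\in V$ lies in neither $\tilde{W}_1=\{(a,b,c):a=0\}$ nor $\tilde{W}_2=\{(a,b,c):a=c\}$. So $\bigcup_i\tilde{W}_i\subsetneq V$ and the vector-space covering lemma gives you nothing.

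In fact the same example shows the statement you are trying to prove is false as written: affine $n$-space over $\mathbb{F}_q$ \emph{is} covered by $q$ parallel hyperplanes, so the affine covering number is exactly $|k|$, not $|k|+1$. The extra $+1$ is correct only for covering a vector space by proper \emph{linear} subspaces, all of which are forced to contain $0$ and hence lose a degree of freedom; proper affine subspaces are not so constrained. The lemma as printed has an off-by-one error, and no amount of cleverness can close the gap. Fortunately the bound $|I|\ge|k|$ that your line argument delivers is exactly what the paper needs: Theorem~\ref{the:criteria} invokes the covering result only under the hypothesis $|\T|<|k|$, for which $|I|\ge|k|$ already suffices. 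Keep the first half of your argument and discard the second.
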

\begin{proof}
See for instance \cite[Section 3]{Clark_cov}. 
\end{proof}
\begin{theorem}\label{the:criteria}
Let $d$, $\fd$, and $A_T$ be defined as above.  Suppose the base field $k$ satisfies $|\T|<|k|$.
Then $\fd=\md$ if and only if $\rank A_T=d$, for some $T\in \T$.
\end{theorem}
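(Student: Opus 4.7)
The plan is to combine Theorem \ref{the:union_aff}, which expresses $\fd$ as the finite union $\bigcup_{T\in\T}(\V_T+B_T)$ of affine subspaces, with the covering Lemma \ref{lem:covering}. The key observation is that each affine piece $\V_T+B_T$ actually sits inside $\md$ (by Remark \ref{rmk:about_VT}(3)), and its dimension equals $\rank A_T$. Since $\md$ itself is a $d$-dimensional affine subspace of $\pd$ and each $\V_T+B_T$ has dimension at most $d$ (Remark \ref{rmk:about_VT}(2)), the question of whether these pieces exhaust $\md$ reduces to a linear-algebraic question about the ranks of the matrices $A_T$ versus the cardinality of $\T$.

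For the forward implication, I would argue by contrapositive in the easy direction: if there is some $T\in\T$ with $\rank A_T=d$, then $\V_T+B_T$ is a $d$-dimensional affine subspace contained in the $d$-dimensional affine space $\md$. By the stated fact that two affine subspaces of the same dimension, one contained in the other, must coincide, we get $\V_T+B_T=\md$, and therefore $\fd\supseteq\md$. The reverse containment $\fd\subseteq\md$ is tautological, so $\fd=\md$.

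For the converse, suppose $\rank A_T<d$ for every $T\in\T$. Then each $\V_T+B_T$ is a proper affine subspace of $\md$. If $\fd=\md$ held, Theorem \ref{the:union_aff} would give a covering
\[
\md=\bigcup_{T\in\T}(\V_T+B_T)
\]
of the affine space $\md$ by $|\T|$ proper affine subspaces. Lemma \ref{lem:covering} then forces $|\T|\geq |k|+1$, contradicting the standing hypothesis $|\T|<|k|$. Hence $\fd\subsetneq \md$, completing the equivalence.

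I do not foresee a genuine obstacle here: both directions are essentially bookkeeping once Theorem \ref{the:union_aff} and Lemma \ref{lem:covering} are in hand. The only subtle point to nail down is the containment $\V_T+B_T\subseteq\md$ (so that the covering lemma applies inside the affine space $\md$ rather than inside $\pd$), which is exactly what Remark \ref{rmk:about_VT}(3) supplies via the fact that the leading coefficient of $B_T$ is $1$ while the leading row of every column of $A_T$ vanishes.
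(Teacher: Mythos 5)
Your proof is correct, and it is in fact the shorter argument that the paper itself alludes to but deliberately avoids. Both directions check out: the easy direction uses Remark \ref{rmk:about_VT}(2)--(3) together with the fact that nested affine subspaces of equal dimension coincide, and the harder direction applies Theorem \ref{the:union_aff} to cover $\md$ by the $|\T|$ pieces $\V_T+B_T$ and then invokes Lemma \ref{lem:covering} to contradict $|\T|<|k|$. (Minor quibble: you label the direction ``$\rank A_T=d$ for some $T$ $\Rightarrow$ $\fd=\md$'' as ``the forward implication \dots\ by contrapositive,'' but it is really the reverse implication proved directly; the math is unaffected.)

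Where you differ from the paper is in \emph{how} you handle the covering step. You apply Lemma \ref{lem:covering} as a black box, which gives the cleanest possible proof. The paper instead opens up the proof of the covering lemma (following \cite[Section 3]{Clark_cov}): it passes from each $\V_T+B_T$ to an explicit hyperplane $\A_T\subset\md$ cut out by a normal vector $\nn_T$, picks a base point $\uu$ lying in exactly one $\A_U$, picks $\vv\in\md\setminus\A_U$, and shows the line through $\uu$ and $\vv$ meets each $\A_T$ in at most one point, so that some point of the line escapes the union when $|k|>|\T|$. The payoff of the paper's longer route is entirely algorithmic: those explicit normal vectors, the line $\mathcal{D}$, and the finite exceptional set $\Gamma$ of parameters $\alpha_T$ are exactly what Algorithm \ref{alg:counter_ex} needs to output a concrete counter-example polynomial $F$. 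Your argument establishes the theorem but does not hand you such an $F$; the paper's does. So the two proofs are equivalent as verifications of the statement, but the paper's version is chosen because the constructive content is reused later.
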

\begin{proof}
 As before, we identify $\pd$ with  $k^{d+1}$ using \eqref{eq:col_id}. Note that from Theorem \ref{the:union_aff}, $\fd=\md$ implies that  $\md$ is a finite union of proper affine subspaces. Therefore, the result we want to prove  is essentially an application of Lemma \ref{lem:covering}. Nonetheless, below we provide a proof that follows that in \cite[Section 3]{Clark_cov} but which is more suitable for computations. Our ultimate goal  is to use it to find a counter-example to \FPP\ for $A_1,\ldots,A_n$.

If $\rank A_T=d$, for some $T\in\T$, then $\dim (\V_T+B_T)= d=\dim\md$ and  $\fd=\md$, since  $ \V_T+B_T\subset \fd \subset\md$. To prove the converse, we show that if  $\rank A_T=\dim(\V_T+B_T)<d$ for all $T\in\T$, then $\bigcup_{T\in\T}(\V_T+B_T)\subsetneq \md$. 

First, let $\dim\V_T^\perp$ be the orthogonal complement of $\V_T$ under  the canonical inner product $\uu\cdot\vv$ on $k^{d+1}$. If $\rank A_T<d$ for all $T\in\T$, then $\dim\V_T^\perp\geq 2$. From Remark \ref{rmk:about_VT}, it follows that $\mathbf{e}=(1,0,\ldots,0)\in\V_T^\perp$. As a result, we can choose a non-zero vector $\nn_T\in\V_T^\perp$ which is linearly independent from $\mathbf{e}$. Thus, $\V_T+B_T$ is a subset of 
$$
\A_T=\{\uu\in k^{d+1}:\mathbf{e}\cdot\uu=1, \nn_T\cdot\uu=\nn_T\cdot B_T\}.
$$ 
Clearly,  $\dim \A_T=d-1$, for all $T\in\T$. Under these assumptions, it is enough to prove that $\bigcup_{T\in\T}\A_T\subsetneq \md$.

Without loss of generality, we assume that for all $U\in\T$
$$
\A_U\backslash\bigcup_{T\in\T\backslash \{U\}}\A_T\neq\emptyset.
$$ 
This guarantees the existence of  a vector $\uu\in\md$ such that  $\uu\in \A_{U}$ but $\uu\notin\A_{T}$ for all $T\neq U$.
Additionally, we can chose $\vv\in\md\backslash\A_U$. We consider the line  $\mathcal{D}=\{(1-\alpha)\uu+\alpha\vv:\alpha\in k\}\subset\md$. The result follows if we are able to prove that  $|\A_T\cap\mathcal{D}|\leq 1$, for all $T\in\T$. Indeed,  in this case
$$
\left|\mathcal{D}\bigcap\left( \bigcup_{T\in\T}\A_T\right)\right|=\left|  \bigcup_{T\in\T}\A_T\cap\mathcal{D}\right|\leq |\T|.
$$
Since $|k|=|\mathcal{D}|$, this proves that  $\bigcup_{T\in\T}\A_T\subsetneq \md$ if $|k|>|\T|$.

To compute $\A_T\cap\mathcal{D}$, we need to solve for $\alpha$ the equation $\nn_T\cdot[(1-\alpha)\uu+\alpha\vv]=\nn_T\cdot B_T$,  which can be simplified into
$$
[\nn_T\cdot(\vv-\uu)]\alpha=\nn_T\cdot(\uu-B_T).
$$
The above equation in $\alpha$ has more than one solution if and only if
$$
\nn_T\cdot(\vv-\uu)=0 \quad\text{and}\quad\nn_T\cdot(\uu-B_T)=0,
$$
which, in turn, happens if and only if
$$
\nn_T\cdot\vv=\nn_T\cdot B_T \quad\text{and}\quad\nn_T\cdot\uu=\nn_T\cdot B_T.
$$
This last equation is equivalent to the fact $\uu,\vv\in \A_T$. Since this contradicts the choice of $\uu$ and $\vv$, we conclude that $\A_T\cap\mathcal{D}$ has at most one element. We can actually say a bit more:  $\A_T\cap\mathcal{D}=\emptyset$, if $\nn_T\cdot\vv=\nn_T\cdot\uu$. Otherwise,   $[(1-\alpha)\uu+\alpha\vv]\in\A_T$, for $\alpha=\nn_T\cdot(\uu-B_T)/[\nn_T\cdot(\vv-\uu)]$.
\end{proof}

This last result allows us to prove the following lower bound for the Frobenius degree of $A_1,\ldots,A_n$.

\begin{corollary} \label{cor:low_bound}
Suppose that the base field $k$ satisfies $|\T|<|k|$.  If $d$ is an integer such that $\sum_{i=1}^n \max(d-a_i,0)\leq d$ then $\fd\subsetneq\md$. 
In particular,
$$
\max\left\{d\in\zz: \sum_{i=1}^n \max(d-a_i,0)\leq d\right\} \leq g(A_1,\ldots,A_n).
$$
\end{corollary}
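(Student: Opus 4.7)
The natural strategy is to apply Theorem~\ref{the:criteria}, which (given the blanket assumption $|\T|<|k|$) equates the strict inclusion $\fd\subsetneq\md$ with the statement that \emph{every} $T\in\T$ satisfies $\rank A_T<d$. So I would reduce the corollary to proving that, under the hypothesis $\sum_{i=1}^n\max(d-a_i,0)\leq d$, no matrix $A_T$ attains rank $d$.

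To bound $\rank A_T$, I would use the obvious inequality $\rank A_T\leq \ind(T)$, since (for $\rt\neq\emptyset$) $A_T$ has exactly $\ind(T)=\sum_{i\in\rt}e_i$ columns. The key point is then the uniqueness of the index $j=j(T)$ built into the definition of $\T$: one has $e_j=d-a_j$, whereas every $i\in\rt\setminus\{j\}$ is forced to satisfy $e_i\leq d-a_i-1$. Summing,
$$
\ind(T) \;\leq\; (d-a_j)+\sum_{i\in\rt\setminus\{j\}}(d-a_i-1) \;\leq\; \sum_{i=1}^n\max(d-a_i,0)\;-\;|\rt\setminus\{j\}|.
$$
When $\rt\setminus\{j\}\neq\emptyset$, the hypothesis gives $\ind(T)\leq d-1$. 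The two corner cases are handled separately: if $\rt=\{j\}$, then $\ind(T)=d-a_j\leq d-1$ because $a_j\geq 1$ (from the standing assumption $1\notin\{A_1,\ldots,A_n\}$ in this section); and if $\rt=\emptyset$, then $A_T$ is the zero column by definition, so $\rank A_T=0<d$ (the edge case $d=0$ is also fine, since $\fd=\emptyset\subsetneq \md=\{1\}$ under $a_i\geq 1$).

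Putting these bounds together, $\rank A_T\leq d-1$ for every $T\in\T$, and Theorem~\ref{the:criteria} delivers $\fd\subsetneq\md$. The ``in particular'' statement is then a free consequence of the definition of $g(A_1,\ldots,A_n)$ as the largest integer $d$ with $\fd\subsetneq\md$: any $d$ satisfying the displayed inequality is itself a witness that $g\geq d$, so taking the maximum preserves the bound. I don't foresee a substantive obstacle; the only delicate point is exploiting the uniqueness of $j(T)$ to upgrade $e_i\leq d-a_i$ to $e_i\leq d-a_i-1$ for $i\neq j(T)$, which is precisely what converts the hypothesis $\sum_i\max(d-a_i,0)\leq d$ into the sharp bound $\ind(T)\leq d-1$ needed above.
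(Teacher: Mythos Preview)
Your proof is correct and follows essentially the same approach as the paper: reduce to Theorem~\ref{the:criteria}, bound $\rank A_T$ by the number of columns $\ind(T)$, and then exploit the strict inequality $e_i<d-a_i$ for $i\neq j(T)$ to force $\ind(T)\leq d-1$. Your case analysis is in fact a bit more carefully stated than the paper's (which writes the singleton case as ``$\rt=\{d-a_{j(T)}\}$'' when ``$\rt=\{j(T)\}$'' is meant), but the substance is identical.
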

\begin{proof}
Assume that $d$ is an integer such that $\sum_{i=1}^n \max(d-a_i,0)\leq d$. 
As a consequence of Theorem \ref{the:criteria},  to show that $\fd\subsetneq\md$, we need to prove that $\rank A_T<d$, for all $T\in\mathcal{T}_{d}$ with $\rt\neq\emptyset$.

Since the number of columns of a matrix is an upper bound for its rank, it is true that
 $$
\rank A_T\leq  \ind(T),
$$
 for all $T\in\T$ satisfying $\rt\neq\emptyset$.
 If $\rt=\{d-a_{j(T)}\}$, then $\rank A_T\leq \ind(T)=\sum_{i=1}^n \max(d-a_i,0)=d-a_{j(T)}<d$.
Otherwise $\rt\neq\{d-a_{j(T)}\}$ and
\begin{eqnarray*}
\ind(T)&\leq & \sum_{i\in\rt}e_i+\sum_{i\notin\rt}\max(e_i,0) \\
&<&  \sum_{i=1}^n \max(d-a_i,0).
\end{eqnarray*}
Thus, if  $d$ is an integer such that $\sum_{i=1}^n \max(d-a_i,0)\leq d$ then $\rank A_T<d$,  for all $T\in\mathcal{T}_{d}$ with $\rt\neq\emptyset$. 
\end{proof}
\begin{remark}
For every $n\geq 2$, we can use Lemma \ref{lem:part_lowerbnd} to show that the lower bound given in the previous corollary is sharp. In fact,  choose pairwise coprime monic polynomials  $A_1,\ldots,A_n$  such that $\deg A_i=a>0$, for all $1\leq i\leq n$. If $\Atil_i=\prod_{j=1}^n A_j/A_i$, then $\deg \Atil_i=(n-1)a$ and, from Lemma \ref{lem:part_lowerbnd}, 
$$
g(\Atil_1,\ldots,\Atil_n)=\deg A_1+\ldots+\deg A_n=na.
$$
On the other hand $na$ satisfies
$$
\sum_{i=1}^n \max(na-\deg \Atil_i,0)=\sum_{i=1}^n \max(a,0)\leq na.
$$
Thus 
$$
na\leq \max\left\{d\in\zz: \sum_{i=1}^n \max(d-\deg \Atil_i,0)\leq d\right\} \leq g(\Atil_1,\ldots,\Atil_n)=na.
$$
\end{remark}
As discussed in Section \ref{sec:frob_field_ext}, the Frobenius degree $g$ of coprime monic polynomials $A_1,\ldots,A_n$ over $k$ is not affected by a field extension $K/k$, if $|k|$ is sufficiently large.  As we show below, this statement is also a consequence of Theorem \ref{the:criteria}.
\begin{corollary}\label{cor:frob_ext_field}
Let $A_1,\ldots,A_n$ be coprime monic polynomials over a field $k$, let $K/k$ be a field extension, and let $g^+$ be the upper bound given in Remark \ref{rmk:min_up_bnd}. If $|k|>|\mathcal{T}_{g^+}|$ then 
$$
g_K(A_1,\ldots,A_n)=g_k(A_1,\ldots,A_n).
$$
\end{corollary}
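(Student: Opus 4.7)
The plan is to leverage Theorem \ref{the:criteria}, whose criterion ($\rank A_T = d$ for some $T \in \mathcal{T}_d$) is expressed purely in terms of the indexing set $\mathcal{T}_d = \mathcal{T}_d(a_1,\ldots,a_n)$, which depends only on the degrees $a_i$, and the matrices $A_T$, whose entries are polynomial expressions in the coefficients of the $A_i$'s. These data are intrinsic to $(A_1,\ldots,A_n)$ and are unchanged under base change from $k$ to any field extension $K/k$. The key algebraic fact I will use is that the rank of a matrix with entries in $k$ is invariant under extension of scalars.

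First I would record the easy inequality $g_k\leq g_K$, already noted in Section \ref{sec:frob_field_ext}; in the matrix language of the previous section it reflects the fact that for any monic $F\in k[t]$ of degree $d$ and any $T\in\mathcal{T}_d$, membership of $F$ in $\mathcal{V}_T+B_T$ over $K$ forces the same over $k$, since $A_T$ has entries in $k$ and $F-B_T\in k^{d+1}$ implies $F-B_T\in(\mathcal{V}_T\otimes_k K)\cap k^{d+1}=\mathcal{V}_T$.

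For the reverse inequality, fix $d>g_k$. If $d>g^+$, then the same $g^+$ serves as an upper bound over $K$ (since Remark \ref{rmk:min_up_bnd} is built from degrees and from gcds of monic polynomials, which are insensitive to extension of scalars), so $g_K\leq g^+<d$. If instead $g_k<d\leq g^+$, then $\fd=\md$ holds over $k$. The hypothesis $|k|>|\mathcal{T}_{g^+}|$ together with the monotonicity $|\mathcal{T}_d|\leq|\mathcal{T}_{g^+}|$ --- obtained from the injection $\mathcal{T}_{d_1}\hookrightarrow\mathcal{T}_{d_2}$ that raises the designated coordinate $e_j$ from $d_1-a_j$ to $d_2-a_j$ while keeping the other coordinates fixed --- allows me to apply Theorem \ref{the:criteria} over $k$ to produce some $T\in\mathcal{T}_d$ with $\rank_k A_T=d$. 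Invariance of rank under base change yields $\rank_K A_T=d$, and a second application of Theorem \ref{the:criteria}, this time over $K$ (still valid because $|K|\geq|k|>|\mathcal{T}_d|$), concludes that $\fd=\md$ holds over $K$ as well. Hence $g_K\leq g_k$, completing the proof.

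The argument is essentially bookkeeping: each ingredient of Theorem \ref{the:criteria} is intrinsic to $(A_1,\ldots,A_n)$ except for the field-size hypothesis itself. The only mildly non-trivial step is verifying the monotonicity $|\mathcal{T}_d|\leq|\mathcal{T}_{g^+}|$ for $d\leq g^+$, which is what forces the hypothesis to be phrased in terms of $g^+$ rather than some smaller quantity.
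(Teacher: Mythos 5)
Your proof is correct and takes essentially the same route as the paper: both arguments observe that $\mathcal{T}_d$, $A_T$, $B_T$ and $\rank A_T$ are intrinsic to $(A_1,\ldots,A_n)$ and insensitive to extension of scalars, then apply Theorem~\ref{the:criteria} over each of $k$ and $K$ to transfer the condition $\fd=\md$ between the two fields. The only cosmetic difference is one of bookkeeping --- the paper specializes directly to $d=g_K$ and derives $\mathcal{F}_{g_K}^k\subsetneq\mathcal{M}_{g_K}^k$, whereas you sweep over all $d>g_k$ with a case split at $g^+$; you also spell out the injection $\mathcal{T}_{d_1}\hookrightarrow\mathcal{T}_{d_2}$ that justifies the inequality $|\mathcal{T}_d|\leq|\mathcal{T}_{g^+}|$, which the paper uses without comment.
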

\begin{proof}
For a field extension $K/k$, we let $g_K=g_K(A_1,\ldots,A_n)$. We want to show that $g_K= g_k$. First, we note that 
$$
g_k\leq g_K\leq g^+.
$$
Thus we are left to prove $g_K\leq g_k$.  For that matter, let $d$ be a positive integer. It is not  hard to see that $\T$ depends only on $d$ and the polynomials $A_1,\ldots,A_n$; and not on the base field in which the solutions of \eqref{eq:frob_def} are defined. Similarly, for all $T\in\T$, $A_T$ and $B_T$ are independent of the base field in which \FPP\ is being considered. On the other hand, $\fd$ and $\md$ depend on the base field $K$, and we make this dependence explicit by writing  $\fd^K$ and $\md^K$, respectively. Since 
$$
| \mathcal{T}_{g_K}|\leq |\mathcal{T}_{g^+}|<|k|\leq |K|,
$$
and $\rank A_T$ is independent of the field extension $K/k$, it follows from the definition of $g_K$ and two applications of Theorem \ref{the:criteria} that $\mathcal{F}_{g_K}^k\subsetneq \mathcal{M}_{g_K}^k$. Therefore, $g_K\leq g_k$ as desired.
\end{proof}

\subsection{The algorithm}
We use the notation of the previous section.

The algorithm we describe here only works under the assumption that the base field $k$ satisfies $|k|>|\mathcal{T}_{g+}|$, where $g^+$ is the upper bound obtained in Remark \ref{rmk:min_up_bnd}. Under this assumption, we  can run through all integers $d\leq g^+$ in decreasing order and use Theorem \ref{the:criteria} to check whether $\fd=\md$. The first value of $d$ for which $\fd\subsetneq\md$ is the Frobenius degree of $A_1,\ldots,A_n$. In case $k$ is finite and $|k|\leq |\mathcal{T}_{g^+}|$ then the above strategy works except for the  use of Theorem \ref{the:criteria} to  decide whether $\fd=\md$. Instead, we can check whether such equality holds  by one of the following ``brute force'' methods. $\fd$ can be constructed by computing all possible linear combinations 
$$
\sum_{i=1}^n x_iA_i=F,
$$
with $x_i\in\mnc$ and $\deg F=d$. Then $\fd\subsetneq\md$ if and only if  $|\mathcal{F}_{d}|<q^d=|\mathcal{M}_{d}|$. Alternatively, one can construct $\bigcup_{T\in\T} (\V_T+B_T)=\fd$ and check whether $|\bigcup_{T\in\T} (\V_T+B_T)|=q^d$. It is unclear which of these two methods to check $\fd=\md$ is less computationally expensive if they were to be implemented.  


If we assume that  $|k|>|\mathcal{T}_{g^+}|$ then the algorithm we use is less expensive than any of the above brute force  methods because when we consider the matrix $A_T$, we are simultaneously considering all solutions $(x_1,\ldots,x_n)$ of \eqref{eq:frob_def} with $T=(\deg x_1,\ldots,\deg x_n)\in\T$.  Also, unlike the computation of $|\bigcup_{T\in\T} (\V_T+B_T)|$, we do not have to solve the large number of system of linear equations that are associated to all possible intersections of the form $(\V_T+B_T)\cap(\V_U+B_U)$. Our algorithm has been implemented in Sage\footnote{See the accompanying  Sage worksheet on the first author's \href{https://sites.google.com/site/rpconcei/research}{personal website}.}, and it performed well in all the cases we have tried. It is also easy to implement if one only wants to compute $g(A_1,\ldots,A_n)$. If one also wants to find a counter-example to \FPP\ for $A_1,\ldots,A_n$, then there are some added complications. These are due to the unpacking of some of the theoretical aspects  of the argument for Theorem \ref{the:criteria}. In what follows, we first give  a pseudo-code to compute $g(A_1,\ldots,A_n)$. Later, we give more details on how to implement the construction of a counter-example for \FPP. In both cases, we assume that the reader is able to implement the following sub-routines:
\begin{itemize}
\item \textsc{UpperBound($A_1,\ldots,A_n$)}. 

Calculate an upper bound $g^+$ based on Remark \ref{rmk:min_up_bnd}.

{\bf Input:} $A_1,\ldots,A_n$.

{\bf Output:} $g^+$.

\item \textsc{LowerBound($A_1,\ldots,A_n$)}. 

Calculate the lower bound $g^-$ given  in Corollary \ref{cor:low_bound}.

{\bf Input:} $A_1,\ldots,A_n$.

{\bf Output:} $g^-$.

\item \textsc{Types($d,A_1,\ldots,A_n$)}.

Construct the set $\T$.

{\bf Input:} $d$ and $A_1,\ldots,A_n$.

{\bf Output:} The list of elements in $\T$.

\item \textsc{Tmatrices($T$)}. 

Construct the matrices $A_T$ and $B_T$.

{\bf Input:} An element $T$ of $\T$.

{\bf Output:} The matrices $A_T$ and $B_T$.
\end{itemize}

The pseudo-code for the computation of $g(A_1,\ldots,A_n)$ is Algorithm \ref{alg:fpp} below.
\begin{algorithm}
\caption{Calculate $g(A_1,\ldots,A_n)$.}\label{alg:fpp}
\begin{algorithmic}
\State {\bf Input:} $A_1,\ldots,A_n$.
\State {\bf Output:} $g(A_1,\ldots,A_n)$.
\Require $\gcd(A_1,\ldots,A_n)=1$, $\deg A_i> 0$, $n<p$ and $|k|>|\T|$.
\State $g^+ \gets  \Call{UpperBound}{A_1,\ldots,A_n}$
\State $g^- \gets  \Call{LowerBound}{A_1,\ldots,A_n}$
\For{$d\gets g^+$ {\bf to} $g^-$}
\State $\T\gets \Call{Types}{d,A_1,\ldots,A_n}$
\For{$T=(e_1,\ldots,e_n)$ in $\T$}
\If{$\sum_i \max(e_i,0) < d$}\Comment{If condition holds then $\rank A_T<d$ and the algorithm can move on the next $T$.}
\Else
\State $A_T,B_T\gets \Call{Tmatrices}{T}$
\If{$\rank A_T=d$}\Comment{$d \neq g(A_1,\ldots,A_n)$.}
\State Decrease $d$ and restart the loop for $d$.
\Else
\EndIf
\EndIf
\EndFor
\State \Return $d$
\EndFor
\end{algorithmic}
\end{algorithm}

To construct a counter-example to \FPP\ for $A_1,\ldots,A_n$, we first transform the following qualitative statements contained in the proof of Theorem \ref{the:criteria} into statements that can be checked algorithmically (we follow the notation in the proof of Theorem \ref{the:criteria}):
\begin{itemize} 
\item {\bf Statement 1:} {\it $\V_T+B_T$ is a subset of an affine space $\A_T$ with $\dim \A_T=d-1$}.

To be able to construct $\A_T$ explicitly, we need to construct  a non-zero vector $\nn_T$ orthogonal to $\V_T$ and linearly independent from $\mathbf{e}=(1,0,\ldots,0)$. This can be done by considering $\nn_T$ to be any non-zero solution of the linear system $A_T^t\xx=\mathbf{0}$ which is also not a multiple of $\mathbf{e}$. 
Therefore, $\A_T$ is simply the solution set of the system of linear equations on $\uu$
$$
\left\{ \begin{array}{rcl}
\mathbf{e}\cdot\uu&= &1\\
\nn_T\cdot \uu&=&\nn_T\cdot B_T
\end{array}\right..
$$

\item {\bf Statement 2:} {\it We can find $U\in\T$ such that $\A_U\backslash\bigcup_{T\in\T\backslash \{U\}}\A_T\neq\emptyset.$}

In order to construct such $U$, all we need to do is pick any $U\in \T$ and construct a list $\mathcal{L}$ of all $T\in\T$ for which $\A_T\neq \A_U$.  Indeed, this is a consequence  of the following claim: if $\A_{U}\subset \bigcup_{T\in \mathcal{J}}\A_T$, for some non-empty $\mathcal{J}\subsetneq \T$, then $\A_U=\A_V$, for some $V\in \mathcal{J}$.  To prove the claim, first notice that under the assumption $\A_{U}\subset \bigcup_{T\in \mathcal{J}}\A_T$, we have 
$$
\A_{U}=\bigcup_{T\in\mathcal{J}} (\A_T\cap \A_U).
$$
We observe that either $\A_T\cap \A_U= \emptyset$ or $\A_T\cap \A_U$ is an affine subspace of $\A_U$. Moreover, not all non-empty  $\A_T\cap \A_U$  is a proper subspace of $\A_U$; otherwise Lemma \ref{lem:covering} would contradict the assumption $|\T|< |k|$. Therefore, there exists $V\in\mathcal{J}$ such that $\A_V\cap \A_U=\A_U$ and, since $\dim \A_U=\dim \A_T$ for all $T\in\T$,  it follows that $\A_U= \A_V$.

The intersection $\A_U\cap \A_T$ can be represented by the system of linear equations on $\uu$
$$
\left\{ \begin{array}{rcl}
\mathbf{e}\cdot\uu&= &1\\
\nn_U\cdot \uu&=&\nn_U\cdot B_U\\
\nn_T\cdot \uu&=&\nn_T\cdot B_T
\end{array}\right..
$$
Since $\dim \A_U=\dim\A_T$, this system has $\rank <3$  if and only if $\A_U= \A_T$.  This fact can be used to check whether $\A_u=\A_T$ and construct $\mathcal{L}$. Without loss of generality, we may replace  $\T\gets\mathcal{L}\cup\{U\}$.

\item {\bf Statement 3:} {\it We can find vectors $\uu,\vv\in\md$ such that  $\uu\in \A_{U}$ but $\uu\notin\A_{T}$ for all $T\neq U$, and  $\vv\notin\A_U$.}

To construct $\uu$, we can randomly select $\uu\in\A_U$ until
$$
0\neq \prod_{T\in \T} [\nn_T\cdot(\uu-B_T) ].
$$
 From our choice of $U$, this routine is guaranteed to stop.
 The vector $\vv$ can be chosen as a  solution of
$$
\mathbf{e}\cdot\vv=1\quad\text{and}\quad\nn_U\cdot \vv=\nn_U\cdot B_U+1.
$$

\item {\bf Statement 4:}  {\it $\bigcup_{T\in\T}\A_T\subsetneq \md$, if $|k|>|\T|$.}

Let $\Gamma=\{\alpha_T:T\in \T\}$, where $\alpha_T=0$,  if $\nn_T\cdot\vv=\nn_T\cdot\uu$; otherwise, $\alpha_T=\nn_T\cdot(\uu-B_T)/[\nn_T\cdot(\vv-\uu)]$. Since $\alpha_U=0$, it follows from an argument in the proof of Theorem \ref{the:criteria} that
$$
|\Gamma|=\left|\mathcal{D}\bigcap\left( \bigcup_{T\in\T}\A_T\right)\right|\leq |\T|<|k|.
$$
Therefore,  if we randomly select an element $\beta\in k\backslash \Gamma$, then $\ww=(1-\beta)\uu+\beta \vv$ is such that $\ww\in\md\backslash \bigcup_{T\in\T}\A_T$.
\end{itemize}

We are ready to give a pseudo-code for the construction of a counter-example to \FPP\ for $A_1,\ldots,A_n$. It  should be straightforward to implement it in parallel with Algorithm \ref{alg:fpp}.
\begin{algorithm}
\caption{Calculate a counter example to \FPP\ for $A_1,\ldots,A_n$.}\label{alg:counter_ex}
\begin{algorithmic}
\State {\bf Input:} $A_1,\ldots,A_n$.
\State {\bf Output:} A polynomial $G$ with $\deg G=g(A_1,\ldots,A_n)$ for which \eqref{eq:frob_def} has no solution in $\mnc$.
\Require $\deg A_i> 0$, $n<p$ and $|k|>|\T|$.
\Ensure $g(A_1,\ldots,A_n)=d$
\Procedure{NormalVector}{$A_T,B_T$}\Comment{Compute $\nn_T$.}
\State Solve $A_T^t\xx=\mathbf{0}$
\State $\nn_T\gets \text{Non-zero solution $\xx$ that is not a multiple of $(1,0,\ldots,0)$}$.
\State\Return $[\nn_T,B_T]$.
\EndProcedure
\State $\mathbf{e}\gets(1,0,\ldots,0)$
\State $\T\gets \Call{Types}{d,A_1,\ldots,A_n}$
\State $\mathcal{L}\gets \{\Call{NormalVector}{\textsc{Tmatrices}(T)}: T\in\T\}$
\State Choose $U\in\T$.
\State $\mathcal{N}\gets\emptyset$ \Comment{Compute the set of normal vectors $\nn_T$ without any redundancy with $\nn_U$.}
\For{$[\nn_T,B_T]$ in $\mathcal{L}$} 
\If{ $\rank\{\mathbf{e}\cdot\ww=1\land \nn_T\cdot\ww=\nn_T\cdot B_T\land\nn_U\cdot\ww=\nn_U\cdot B_U\}=3$}
\State $\mathcal{N}\gets$ $\mathcal{N}\cup\{[\nn_T,B_T]\}$.
\EndIf
\EndFor
\State $\uu\gets\Call{RandomElement}{\A_U}$\Comment{Construct $\uu\in\A_U\backslash \A_T$.}
\While{$0= \prod_{[\nn_T,B_T]\in \mathcal{N}} [\nn_T\cdot(\uu-B_T) ]$}
\State $\uu\gets\Call{RandomElement}{\A_U}$
\EndWhile
\State $\vv\gets$ solution of $\mathbf{e}\cdot\vv=1\ \land\  \nn_U\cdot\vv=\nn_U\cdot B_U+1$\Comment{Construct $\vv\in\md\backslash \A_U$.}
\State $\Gamma\gets \emptyset$\Comment{Construct the set $\Gamma$.}
\For{$\nn_T\in\mathcal{N}$}
\If{$\nn_T\cdot\uu\neq \nn_T\cdot\vv$}
\State $\Gamma\gets\Gamma\cup\{\nn_T\cdot(\uu-B_T)/[\nn_T\cdot(\vv-\uu)]\}$
\EndIf
\EndFor
\State $\beta\gets\Call{RandomElement}{k^*}$
\While{$\beta\in\Gamma$}
\State $\beta\gets\Call{RandomElement}{k^*}$
\EndWhile
\State\Return The polynomial associated to $(1-\beta)\uu+\beta\vv$.
\end{algorithmic}
\end{algorithm}

\section{More examples}

There are a few questions  that naturally arise when one starts to think and experiment with \FPP. In this section, we use our  algorithm to provide answers to some of these questions. In all of the examples, we are assuming that the base field is $k=\qq$. Because of the natural limitation of the problem to the condition $|k|>\mathcal{T}_{g^+}$, the questions over a finite field $k$ have either a trivial answer or cannot be checked using our algorithm. 
\begin{example}
\emph{Does $g(A_1,\ldots,A_n)$ depend only on $\deg A_1,\ldots,\deg A_n$?}

Initially, it seemed plausible that $g(A_1,\ldots,A_n)$ was independent of the polynomials $A_1,\ldots,A_n$ and depended only on their degrees. This is what happens in dimension 2, in the examples described by Theorem \ref{lem:part_lowerbnd} and also on multiple tests we ran with our algorithm. It is not a general feature of \FPP\ though, as the following examples illustrate. 

Take $A_i=(t+i)^2$. Then $g(A_{-1},A_0,A_1)=3$. On the other hand, all the polynomials $B_i=t^2-i$ also have degree 2, but $g(B_{-1},B_0,B_1)=4$.
\end{example}
\begin{example}
 \emph{Is $g(A_1,\ldots,A_n)$ always equal to the upper bound in Remark \ref{rmk:min_up_bnd} or the lower bound in Corollary \ref{cor:low_bound}?}

This question was also inspired by the examples in Section \ref{sec:examples} for which we were able to compute $g(A_1,\ldots,A_n)$ exactly. This question has a negative answer for $A_i=(t+i)^7$, $i=-1,0,1$. Corollary \ref{cor:low_bound} and Remark \ref{rmk:min_up_bnd} implies that $10\leq g(A_{-1},A_0,A_1)\leq 14$, when, in fact, $g(A_{-1},A_0,A_1)=11$.
\end{example}

\bibliography{frob}{}

\providecommand{\bysame}{\leavevmode\hbox to3em{\hrulefill}\thinspace}
\providecommand{\MR}{\relax\ifhmode\unskip\space\fi MR }
\providecommand{\MRhref}[2]{%
  \href{http://www.ams.org/mathscinet-getitem?mr=#1}{#2}
}
\providecommand{\href}[2]{#2}
\begin{thebibliography}{Tha04}

\bibitem[Cla12]{Clark_cov}
Pete~L. Clark, \emph{Covering numbers in linear algebra}, Amer. Math. Monthly
  \textbf{119} (2012), no.~1, 65--67. \MR{2877668}

\bibitem[RA05]{Alfonsin_book}
J.~L. Ram{\'{\i}}rez~Alfons{\'{\i}}n, \emph{The {D}iophantine {F}robenius
  problem}, Oxford Lecture Series in Mathematics and its Applications, vol.~30,
  Oxford University Press, Oxford, 2005. \MR{2260521 (2007i:11052)}

\bibitem[Ros02]{Rosen_book}
Michael Rosen, \emph{Number theory in function fields}, Graduate Texts in
  Mathematics, vol. 210, Springer-Verlag, New York, 2002. \MR{1876657
  (2003d:11171)}

\bibitem[Tha04]{Thakur_book}
Dinesh~S. Thakur, \emph{Function field arithmetic}, World Scientific Publishing
  Co., Inc., River Edge, NJ, 2004. \MR{2091265 (2005h:11115)}

\end{thebibliography}
\bibliographystyle{amsalpha}
\end{document}